\documentclass[a4paper,12pt]{article}
\usepackage{latexsym}
\usepackage{amsmath}
\usepackage{amssymb}
\usepackage{amscd}
\usepackage{graphicx}
\usepackage[usenames,dvipsnames]{color}

\newtheorem{theo}{Theorem}[section]
\newtheorem{lemma}[theo]{Lemma}
\newtheorem{coro}[theo]{Corollary}
\newtheorem{proposition}[theo]{Proposition}
\newtheorem{definition}[theo]{Definition}
\newtheorem{conjecture}[theo]{Conjecture}
\newcommand{\EQ}{\begin{equation}}
\newcommand{\EN}{\end{equation}}
\newenvironment{proof}{\indent{\em Proof.  }}{\hfill{$\Box$\bigskip\newline}}

\newcommand{\Aut}{\mbox{\rm Aut}}

\newcommand{\Supp}{\operatorname{Supp}}
\newcommand{\wt}{\mbox{\rm wt}}

\newcommand{\ba}{{\bf a}}

\newcommand{\bc}{{\bf c}}

\newcommand{\be}{{\bf e}}

\newcommand{\rr}{\bar{r}}

\newcommand{\by}{{\bf y}}
\newcommand{\bx}{{\bf x}}
\newcommand{\bz}{{\bf z}}
\newcommand{\bv}{{\bf v}}
\newcommand{\bu}{{\bf u}}
\newcommand{\bw}{{\bf w}}

\newcommand{\bo}{{\bf 0}}

\newcommand{\Det}{\operatorname{det}}

\newcommand{\F}{\mathbb{F}}



\title{Families of nested completely regular
codes and distance-regular graphs\footnote{This work has been
partially supported by the Spanish MICINN grant TIN2013-40524-P;
the Catalan grant 2009SGR1224 and also by the
Russian fund of fundamental researches 12-01-00905.\newline \indent $^1$J. Rif\`{a} and J. Borges
are with the Department of Information and Communications
Engineering, Universitat Aut\`{o}noma de Barcelona.\newline \indent $^2$ V. Zinoviev is with the
A. A. Harkevich Institute for Problems of Information
Transmission, Russian Academy of Sciences.}}

\author{J. Borges$^1$, J. Rif\`{a}$^1$, V. A. Zinoviev$^2$}
\begin{document}

\maketitle

\begin{abstract}
In this paper infinite families of linear binary nested completely
regular codes are constructed. They have covering radius $\rho$
equal to $3$ or $4$, and are $1/2^i$-th parts, for
$i\in\{1,\ldots,u\}$ of binary (respectively, extended binary)
Hamming  codes of length $n=2^m-1$ (respectively, $2^m$), where
$m=2u$. In the usual way, i.e., as coset graphs, infinite families
of embedded distance-regular coset graphs of diameter $D$ equal
to $3$ or $4$ are constructed. In some cases, the constructed codes are
also completely transitive codes and the corresponding coset
graphs are distance-transitive.
\end{abstract}

\section{Introduction}\label{int}

Let $\F_q$ denote the finite field with $q\geq 2$ elements, $q$ being
a prime power. For a vector
$\bx \in \F_q^n$ denote by $\wt(\bx)$ its Hamming weight (i.e., the number
of its nonzero positions).
For every two vectors $\bx=(x_1,\ldots, x_n)$ and $\by = (y_1, \ldots, y_n)$
from $\F_q^n$ denote by $d(\bx, \by)$ the Hamming distance between $\bx$
and $\by$ (i.e., the number of positions $i$, where $x_i \neq y_i$).
We use the standard notation $[n,k,d]$ for a binary linear code $C$
of length $n$, dimension $k$ and minimum distance $d$ over the
binary field $\F_2$.

The automorphism
group $\Aut(C)$ of $C$ consists of all $n\times n$ binary permutation
matrices $M$, such that $\bc M \in C$ for all $\bc \in C$.  Note
that the automorphism group $\Aut(C)$ coincides with the subgroup
of the symmetric group $S_n$ consisting
of all $n!$ permutations of the $n$ coordinate positions which send
$C$ into itself. $\Aut(C)$ acts in a natural way over the set of cosets
of $C$: $\pi(C+\bv)=C+\pi(\bv)$ for every $\bv\in\F_2^n$ and $\pi\in\Aut(C)$.

For any $\bv \in \F_2^n$ its {\em distance} to the code
$C$ is $d(\bv,C)=\min_{\bx \in C}\{ d(\bv, \bx)\}$ and the {\em covering
radius} of the code $C$ is $\rho=\max_{\bv \in \F_2^n} \{d(\bv, C)\}$.
Let $J = \{1,2, \ldots, n\}$ be the set of coordinate positions of
vectors from $\F_2^n$. Denote by $\Supp(\bx)$ the {\em support} of
the vector $\bx = (x_1, \ldots, x_n)\in \F_2^n$, i.e.,
$\Supp(\bx)~=~\{j \in J:~x_j \neq 0\}$. Say that two vectors
$\bx,\by \in\F_2^n$ are {\em neighbors} if $d(\bx,\by)=1$ and also say that vector $\bx$ {\em covers}
vector $\by$ if $\Supp(\by)\subseteq \Supp(\bx)$.

For a given binary code $C$ with the zero codeword and with covering radius
$\rho=\rho(C)$ define
$$
C(i)~=~\{\bx \in \F_2^n:\;d(\bx,C)=i\},\;\;i=1,2,\ldots,\rho,
$$
and
$$
C_i~=~\{\bc \in C:\;\wt(\bc) = i\},\;\;i=0,1,\ldots, n.
$$


\begin{definition}\label{de:1.1}  A code $C$ with covering radius
$\rho=\rho(C)$ is completely regular,
if for all $l\geq 0$ every vector $x \in C(l)$ has the same
number $c_l$ of neighbors in $C(l-1)$ and the same number
$b_l$ of neighbors in $C(l+1)$. Also, define
$a_l = (q-1)n-b_l-c_l$ and note that $c_0=b_\rho=0$.\\
Alternatively, $C$ is completely regular if and only if the weight
distribution of any coset $C+\bv$ of weight $i$, for
$i\in\{0,\ldots,\rho\}$ is uniquely determined by the minimum
weight $i$ of $C+\bv$.\\ For a completely regular code, define
$(b_0, \ldots, b_{\rho-1}; c_1,\ldots, c_{\rho})$ as the intersection
array of $C$.
\end{definition}

\begin{definition}\cite{sole} A binary linear code $C$ with
covering radius $\rho$ is completely transitive if $\Aut(C)$
has $\rho + 1$ orbits when acts on the cosets of $C$.
\end{definition}

Since two cosets in the same orbit have the same weight
distribution, it is clear that any completely transitive code is
completely regular.

Existence and enumeration of completely regular and completely
transitive codes are open hard problems (see
\cite{bro2,del1,neu,sole,Giu} and references there). The purpose of this
paper is to construct nested infinite families of completely
regular codes with covering radius $\rho=3$ and $\rho=4$. When $m$
is growing the length of the chain of these nested codes (with
constant covering radius) is also growing. For length $n=2^m-1$,
where $m=2u$, each family is formed by $u$ nested completely
regular codes of length $n$ with the same covering radius
$\rho=3$. The last code in the nested family, so the code with the
smallest cardinality is a $1/2^u$-th part of a Hamming code of
length $n$. These last codes are known to be completely regular
codes due to Calderbank and Goethals \cite{cal,kas}. These nested
families of completely regular codes and their extended codes
induces infinite families of embedded distance-regular coset
graphs with diameters $3$ and $4$, which also give interesting
families of embedded covering graphs. We point out that in some
cases such completely regular codes are also completely transitive
and hence the corresponding coset graphs are also distance
transitive.

\section{Preliminary results}\label{pre}

\begin{definition}\label{de:1.2}
Let $C$ be a binary code of  length $n$ and let $\rho$ be its
covering radius. We say that $C$ is {\em uniformly packed} in the
wide sense, i.e., in the sense of \cite{bas1}, if there exist
rational numbers $~\beta_0,\ldots,\beta_{\rho}~$ such that for any
$\bv\in\F_2^n$
\EQ
\sum_{k=0}^{\rho}\beta_k\,\alpha_k(\bv)~=~1,
\EN
where $\alpha_k(\bv)$ is the number of codewords at distance $k$ from
$\bv$.
\end{definition}

Let $C$ be a linear code. Denote by $s$ the number of nonzero weights in its dual code $C^\perp$.
Following \cite{del1}, we call $s$ the {\em external distance} of $C$.

\begin{lemma}\label{lem:2.1}
Let $C$ be a code with  covering radius
$\rho$ and external distance $s$. Then:
\begin{description}
\item[(i)]$~~\cite{del1}$ $\rho \leq s$.
\item[(ii)]$~\cite{bas2}$ $\rho = s$ if and only if $C$ is uniformly
packed in the wide sense.
\item [(iii)]$\cite{bro2}$ If $C$ is completely regular, then it is
uniformly packed in the wide sense.
\end{description}
\end{lemma}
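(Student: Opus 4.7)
The three parts share a common toolkit based on the Fourier/Krawtchouk expansion of the coset distance distribution, and I would attack them in the order (iii), then (i), then (ii).

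For part (iii), since $C$ is completely regular the quantity $\alpha_k(\bv)$ depends only on $l=d(\bv,C)$ (by the alternative characterization in Definition~\ref{de:1.1}), so I may write $\alpha_k(\bv)=\alpha_{k,l}$. Clearly $\alpha_{k,l}=0$ for $k<l$, and $\alpha_{l,l}\ge 1$ because some codeword attains the minimum distance $l$. Hence the $(\rho+1)\times(\rho+1)$ linear system
\[
\sum_{k=0}^\rho \beta_k\,\alpha_{k,l}=1,\qquad l=0,1,\ldots,\rho,
\]
has a triangular, nonsingular coefficient matrix, and its unique rational solution $(\beta_0,\ldots,\beta_\rho)$ witnesses that $C$ is uniformly packed in the wide sense.

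For parts (i) and (ii) the key tool is the identity, valid for any linear $C$ and any $\bv\in\F_2^n$,
\[
\alpha_k(\bv)=\frac{|C|}{2^n}\sum_{j=0}^n K_k(j)\,T_j(\bv),\qquad T_j(\bv)=\!\!\sum_{\bx\in C^\perp,\;\wt(\bx)=j}\!\!(-1)^{\bx\cdot\bv},
\]
where $K_k$ is the binary Krawtchouk polynomial. Observe that $T_0\equiv 1$, $T_j\equiv 0$ unless $j\in W^\perp:=\{0,w_1,\ldots,w_s\}$, and the characters $T_{w_1},\ldots,T_{w_s}$ are linearly independent as functions on $\F_2^n$. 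Substituting into the uniform-packing identity $\sum_k \beta_k\alpha_k(\bv)=1$ and using this independence yields the equivalent reformulation: $C$ is uniformly packed in the wide sense iff there exists a polynomial $Q(x)=\sum_{k=0}^\rho \beta_k K_k(x)$ of degree at most $\rho$ with $Q(0)=2^n/|C|>0$ and $Q(w_i)=0$ for $i=1,\ldots,s$. Since any nonzero such $Q$ has $s$ distinct prescribed roots, uniform packing forces $\rho\ge s$. For the reverse inequality $\rho\le s$ in (i) I would follow Delsarte \cite{del1}: combining the support theorem for coset distance distributions (at most $s+1$ nonzero components) with the positivity of the $\alpha_k(\bv)$ and the Krawtchouk orthogonality rules out $\rho>s$. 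Combining $\rho\le s$ with $\rho\ge s$ gives $\rho=s$ whenever $C$ is uniformly packed; conversely, when $\rho=s$, Lagrange interpolation on $\{0,w_1,\ldots,w_s\}$ produces a polynomial $Q$ of degree $s=\rho$ with the prescribed values, whose Krawtchouk coefficients are the desired $\beta_k$. This establishes (ii).

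The main technical obstacle is Delsarte's bound $\rho\le s$ in (i): the existence of $Q$ immediately yields $\rho\ge s$ via the fundamental theorem of algebra, but the reverse direction is not purely linear-algebraic and relies on the positivity structure of the Krawtchouk transform. The triangular inversion in (iii) and the Lagrange-interpolation step in (ii) are comparatively routine once the Fourier/Krawtchouk framework is set up.
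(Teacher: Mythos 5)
The paper offers no proof of this lemma at all: all three parts are quoted from \cite{del1}, \cite{bas2} and \cite{bro2}, so there is nothing in the text to compare your argument against; I can only judge it on its own terms. Your proofs of (iii) and (ii) are correct and self-contained. For (iii), the observation that complete regularity makes $\alpha_k(\bv)=\alpha_{k,l}$ a function of $l=d(\bv,C)$ alone, together with $\alpha_{k,l}=0$ for $k<l$ and $\alpha_{l,l}\geq 1$, does give a triangular nonsingular system and hence the $\beta_k$. For (ii), the reduction of the uniform-packing identity to the existence of $Q=\sum_{k=0}^{\rho}\beta_kK_k$ with $Q(0)=2^n/|C|$ and $Q(w_i)=0$ is the standard Bassalygo--Zinoviev argument, and the linear independence of $1,T_{w_1},\ldots,T_{w_s}$ (distinct characters grouped by weight) justifies it; root-counting then gives $\rho\geq s$, and Lagrange interpolation gives the converse when $\rho=s$.

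The one soft spot is (i). First, the auxiliary fact you invoke is misstated: it is the Krawtchouk \emph{transform} of the coset weight distribution that is supported on $\{0,w_1,\ldots,w_s\}$ (at most $s+1$ nonzero components), not the coset distance distribution itself --- the Hamming code, with $s=1$, already has cosets with more than two nonzero weights. Second, and more importantly, you call $\rho\leq s$ ``the main technical obstacle'' and ``not purely linear-algebraic,'' but your own machinery proves it in two lines: take the annihilator polynomial $Q(x)=\frac{2^n}{|C|}\prod_{i=1}^{s}(1-x/w_i)$ of degree exactly $s$, expand it as $\sum_{k=0}^{s}\beta_kK_k$, and conclude $\sum_{k=0}^{s}\beta_k\alpha_k(\bv)=1$ for every $\bv$; since $\alpha_k(\bv)=0$ for all $k<d(\bv,C)$, every $\bv$ satisfies $d(\bv,C)\leq s$. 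Deferring to \cite{del1} is legitimate (the paper does the same), but if you intend to sketch the argument you should sketch this one rather than the garbled support statement.
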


\begin{lemma}\label{lem:2.2} $\cite{rif2}$
Let $C$ be a linear completely regular $[n,k,d]$ code with covering radius
$\rho$ and intersection array $(b_0, \ldots , b_{\rho-1}; c_1, \ldots c_{\rho})$.
Let $\mu_i$ denote the number of cosets of $C$ of weight $i$,
where $i=0,1,\ldots, \rho$. Then the following equality holds:
$$
b_i\mu_i~=~c_{i+1}\mu_{i+1},~~~i=0, \ldots, \rho-1.
$$
\end{lemma}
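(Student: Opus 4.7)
The plan is to use a standard double-counting argument on pairs of neighboring vectors, one from $C(i)$ and one from $C(i+1)$, and then relate the sizes $|C(i)|$ to the coset counts $\mu_i$.

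First I would observe that the sets $C(i)$ partition $\F_2^n$ and that each coset $C+\bv$ is entirely contained in a single $C(i)$: if $\bv$ achieves the minimum weight $i$ in its coset, then for any $\bc\in C$ we have $d(\bc+\bv,C)=d(\bv,C-\bc)=d(\bv,C)=i$ because $C$ is linear. Hence each coset of weight $i$ contributes exactly $|C|$ vectors to $C(i)$, and since distinct cosets are disjoint, $|C(i)|=\mu_i\,|C|$.

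Next I would count, in two different ways, the set
$$P_i=\{(\bx,\by)\in C(i)\times C(i+1)\,:\,d(\bx,\by)=1\}.$$
Counting from the $\bx$-side and invoking the definition of the intersection array: since $C$ is completely regular, every $\bx\in C(i)$ has exactly $b_i$ neighbors in $C(i+1)$, so $|P_i|=|C(i)|\,b_i$. Counting from the $\by$-side: every $\by\in C(i+1)$ has exactly $c_{i+1}$ neighbors in $C(i)$, so $|P_i|=|C(i+1)|\,c_{i+1}$.

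Finally, equating the two expressions and substituting $|C(i)|=\mu_i|C|$, $|C(i+1)|=\mu_{i+1}|C|$, the factor $|C|$ cancels and gives exactly $b_i\mu_i=c_{i+1}\mu_{i+1}$ for $i=0,\dots,\rho-1$. There is no real obstacle here: the only thing worth being careful about is the first observation that a full coset lies in a single $C(i)$, which is where the linearity of $C$ is used. Everything else is a straightforward application of the definition of complete regularity.
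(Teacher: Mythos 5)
Your proof is correct. The paper itself gives no proof of this lemma (it is simply cited from \cite{rif2}), and your double-counting of the edge set between $C(i)$ and $C(i+1)$, combined with the observation that linearity forces each coset of weight $i$ to lie entirely in $C(i)$ so that $|C(i)|=\mu_i|C|$, is exactly the standard argument for this identity.
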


Next, following \cite{bro2}, we give some facts on distance-regular graphs.
Let $\Gamma$ be a finite connected simple graph (i.e., undirected, without loops and multiple
edges). Let $d(\gamma, \delta)$ be the distance between two vertices
$\gamma$ and $\delta$ (i.e., the number of edges in the minimal path between
$\gamma$ and $\delta$). The {\em diameter} $D$ of $\Gamma$ is its largest distance.
Two vertices $\gamma$ and $\delta$ from $\Gamma$ are
{\em neighbors} if $d(\gamma, \delta) = 1$. Denote
$$
\Gamma_i(\gamma) ~=~\{\delta \in \Gamma:~d(\gamma, \delta) = i\}.
$$

An {\em automorphism} of a graph $\Gamma$ is a
permutation $\pi$ of the vertex set of $\Gamma$ such that, for all
$\gamma, \delta \in \Gamma$ we have $d(\gamma,\delta)=1$, if and
only if $d(\pi\gamma,\pi\delta)=1$. Let $\Gamma_i$ be the graph with the same vertices of $\Gamma$, where an edge $(\gamma, \delta)$
is defined when the vertices $\gamma, \delta$ are at distance $i$
in $\Gamma$. Clearly, $\Gamma_1=\Gamma$. The graph $\Gamma$ is called
{\em primitive} if $\Gamma$ and all $\Gamma_i$ ~($i=2, \ldots, D$)
are connected. Otherwise, $\Gamma$ is called {\em imprimitive}. A graph is called \textit{complete} (or a \textit{clique}) if any two of its vertices are adjacents.

A connected graph $\Gamma$ with diameter $D\geq 3$ is called {\em antipodal}
if the
graph $\Gamma_D$ is a disjoint union of cliques~\cite{bro2}. Such a graph is
imprimitive by definition. In this case, the {\em folded graph, or
antipodal quotient} of $\Gamma$ is defined as the graph
$\bar{\Gamma}$, whose vertices are the maximal cliques (which are
called {\em fibres}) of $\Gamma_D$, with two adjacent if and only
if there is an edge between them in $\Gamma$. If, in addition,
each edge $\gamma\in \Gamma$ has the same valency as its image under folding, then
$\Gamma$ is called an {\em antipodal covering graph} of
$\bar{\Gamma}$. If, moreover, all fibres of $\Gamma_D$ have the
same size $r$, then $\Gamma$ is also called an {\em antipodal
$r$-cover} of $\bar{\Gamma}$.

\begin{definition}\cite{bro2}\label{14:de:1.3}
A simple connected graph $\Gamma$ is called {\em distance-regular}
if it is regular of valency $k$, and if for any two vertices
$\gamma, \delta \in \Gamma$ at distance $i$ apart, there are precisely
$c_i$ neighbors of $\delta$ in $\Gamma_{i-1}(\gamma)$ and $b_i$
neighbors of $\delta$ in $\Gamma_{i+1}(\gamma)$.
Furthermore, this graph is called {\em distance-transitive}, if
for any pair of vertices $\gamma, \delta$ at distance
$d(\gamma, \delta)$ there is an automorphism $\pi$ from
$\mbox{\rm Aut}(\Gamma)$ which move this pair $(\gamma,\delta)$ to any other given
pair $\gamma', \delta'$  of vertices at the same distance
$d(\gamma, \delta) = d(\gamma', \delta')$.
\end{definition}

The sequence $(b_0, b_1, \ldots, b_{D-1};
c_1, c_2, \ldots, c_D)$, where $D$ is the diameter of $\Gamma$,
is called the {\em intersection array} of $\Gamma$. The numbers
$c_i, b_i$, and $a_i$, where $a_i=k- b_i - c_i$, are called
{\em intersection numbers}. Clearly $b_0 = k,~~b_D = c_0 = 0,~~c_1 = 1.$

Let $C$ be a linear completely regular code with covering radius $\rho$ and
intersection array  $(b_0, \ldots , b_{\rho-1}; c_1, \ldots c_{\rho})$.
Let $\{B\}$ be the set of cosets of $C$. Define the graph $\Gamma_C$, which is
called the {\em coset graph of $C$}, taking all different cosets $B = C+ {\bf x}$ as
vertices, with two vertices $\gamma = \gamma(B)$ and $\gamma' = \gamma(B')$
adjacent, if and only if the cosets $B$ and $B'$ contain neighbor vectors,
i.e., there are ${\bf v} \in B$ and ${\bf v}' \in B'$ such that $d({\bf v}, {\bf v}') = 1$.

\begin{lemma}\cite{bro2,ripu}\label{lem:2.5}
Let $C$ be a linear completely regular code with covering radius $\rho$ and
intersection array  $(b_0, \ldots , b_{\rho-1}; c_1, \ldots c_{\rho})$
and let $\Gamma_C$ be the coset graph of $C$. Then $\Gamma_C$ is
distance-regular of diameter $D=\rho$ with the same intersection array.
If $C$ is completely transitive, then $\Gamma_C$ is distance-transitive.
\end{lemma}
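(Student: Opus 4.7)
The plan is to transfer complete regularity of $C$ to distance-regularity of $\Gamma_C$ via a three-step strategy: first a distance formula, then a neighbor count, and finally an automorphism-lifting argument. I would first show that for any two cosets,
\[
d_{\Gamma_C}(C+\bx,\, C+\by) \;=\; \min\{\wt(\bz) : \bz \in C + \bx + \by\}.
\]
The upper bound takes a minimum-weight representative $\bv = \be_{j_1} + \cdots + \be_{j_w}$ of $C+\bx+\by$ and exhibits the explicit walk $C+\bx,\; C+\bx+\be_{j_1},\; \ldots,\; C+\bx+\bv = C+\by$ of length $w$. For the lower bound, any walk $B_0, \ldots, B_k$ in $\Gamma_C$ telescopes to express $B_0 + B_k$ as a sum of $k$ weight-$1$ cosets, so $C+\bx+\by$ contains a vector of weight at most $k$. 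In particular the diameter of $\Gamma_C$ equals $\rho$.

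Next I would establish distance-regularity. For any $\bu \in \F_2^n$, the translation $C+\by \mapsto C+\by+\bu$ is a graph automorphism of $\Gamma_C$, so it suffices to fix $B_0 = C$ and count the neighbors of a vertex $B'$ at graph-distance $i$ from $B_0$. Choosing a minimum-weight representative $\bv \in B'$ (so $\bv \in C(i)$), the neighbors of $B'$ in $\Gamma_C$ are exactly the cosets $C + \bv + \be_j$ for $j = 1, \ldots, n$, and by the distance formula each such neighbor lies at graph-distance $d(\bv+\be_j, C)$ from $B_0$. Complete regularity of $C$ now says that among the $n$ Hamming neighbors $\bv + \be_j$ of $\bv$, exactly $c_i$, $a_i$, $b_i$ fall into $C(i-1)$, $C(i)$, $C(i+1)$ respectively, producing precisely the intersection numbers of $C$ for $\Gamma_C$.

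For distance-transitivity, suppose $C$ is completely transitive, so $\Aut(C)$ has $\rho+1$ orbits on cosets, one per weight. Given pairs $(B_0, B_1)$ and $(B_0', B_1')$ at the same graph-distance $i$, I would first apply the translations sending $B_0, B_0'$ to $C$; this turns $B_1, B_1'$ into weight-$i$ cosets. Complete transitivity then supplies some $\pi \in \Aut(C)$ sending one to the other, and $\pi$ induces a graph automorphism of $\Gamma_C$ because it is a Hamming isometry. Composing with the translations yields an automorphism of $\Gamma_C$ mapping $(B_0, B_1)$ to $(B_0', B_1')$. The main obstacle lies in the neighbor-counting step: different $j, j'$ can produce the same coset $C+\bv+\be_j$, precisely when the weight-$2$ vector $\be_j + \be_{j'}$ lies in $C$. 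For codes of minimum distance at least $3$---the setting of this paper---such collisions never occur and the counts match cleanly; otherwise one must either treat $\Gamma_C$ with edge multiplicities or verify that the equivalence classes under $j \sim j'$ iff $\be_j + \be_{j'} \in C$ distribute compatibly among $C(i-1), C(i), C(i+1)$.
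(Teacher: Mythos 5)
The paper offers no proof of this lemma---it is quoted from \cite{bro2,ripu}---so there is nothing internal to compare against; your argument is the standard one from those references and is correct: the coset-weight distance formula, translation-invariance plus the $c_i,a_i,b_i$ neighbor count at a minimum-weight representative, and the lifting of translations and code automorphisms for distance-transitivity. Your closing caveat about collisions $\be_j+\be_{j'}\in C$ is well taken: the statement as printed tacitly assumes minimum distance at least $3$ (otherwise the valency of $\Gamma_C$ can be smaller than $b_0=n$ and the arrays need not coincide), which holds for every code considered in the paper.
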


\begin{definition}\label{de:2.1}
A set $T$ of vectors $\bv \in \F_2^n$ of weight $w$ is
a $t$-{\em design}, denoted by $T(n, w, t, \lambda)$, if
for any vector $\bz \in \F_2^n$ of weight $t$, $1 \leq t \leq w$,
there are precisely $\lambda$ vectors $\bv_i,~i=1,\ldots,\lambda$
from  $T(n, w, t, \lambda)$, each of them covering ${\bf z}$.
\end{definition}

The following well known fact directly
follows from the definition of completely regular code.

\begin{lemma}\label{lem:2.6}
Let $C$ be a completely regular code with minimum distance $d$
and containing the zero codeword. Then the set $C_w$  (of codewords
of $C$ of weight $w$),~$d \leq w \leq n$ forms a $t$-design, if it is not
empty, where $t=e$, if $d=2e+1$ and $t=e+1$, if $d=2e+2$.
\end{lemma}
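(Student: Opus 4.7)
The plan is to show that $f_w(\bz):=|\{\bv\in C_w:\supp(\bz)\subseteq\supp(\bv)\}|$ depends only on $w$, $t$, and the parameters of $C$, not on the choice of weight-$t$ vector $\bz$. First I would verify $d(\bz,C)=t$: clearly $d(\bz,C)\le d(\bz,\bo)=t$, and any $\bc\neq\bo$ with $d(\bz,\bc)<t$ would give $\wt(\bc)=d(\bo,\bc)<2t\le d$, contradicting the minimum distance (strictness is essential in the case $d=2e+2$, $t=e+1$, where $2t=d$). Thus the coset $C+\bz$ has minimum weight exactly $t$, and by the alternative characterization in Definition~\ref{de:1.1}, its weight distribution $\{A_r(t)\}_r$ depends only on $t$.

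Next I would relate $A_{w-t}(t)$ to $f_w(\bz)$. For a codeword $\bv$ of weight $s$, setting $m:=|\supp(\bv)\cap\supp(\bz)|$ gives $d(\bv,\bz)=s+t-2m$, and $\bv$ covers $\bz$ iff $m=t$. So the codewords at distance $w-t$ from $\bz$ split by their weight $s$ (which must satisfy $s\equiv w\pmod 2$ and $s\in\{0\}\cup[d,n]$), each with intersection $m=(s+2t-w)/2$. The covering case is $s=w$, yielding
\begin{equation}\label{eq:plandecomp}
A_{w-t}(t)\;=\;f_w(\bz)\;+\;\varepsilon_w\;+\;\sum_{s}N_s(\bz),
\end{equation}
where $\varepsilon_w=1$ if $w=2t$ and otherwise $\varepsilon_w=0$ (accounting for $\bv=\bo$); the sum runs over $s\in[d,w-2]$ with $s\equiv w\pmod 2$; and $N_s(\bz)$ is the number of codewords of weight $s$ meeting $\supp(\bz)$ in exactly $(s+2t-w)/2$ positions.

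I would then finish by induction on $w$. For $w\in\{d,d+1\}$ the sum in~\eqref{eq:plandecomp} is empty (no $s<w$ simultaneously satisfies $s\ge d$ and the required parity), so $f_w(\bz)=A_{w-t}(t)-\varepsilon_w$ is a constant. For $w\ge d+2$, assume inductively that $C_{w'}$ is a $t$-design for every $d\le w'<w$. The standard fact that the intersection numbers of a $t$-design with any fixed $t$-subset are determined by the design parameters (via the triangular system $\sum_s\binom{s}{r}\mu_s=\binom{t}{r}\lambda_r^{(w')}$ for $r=0,\ldots,t$, with $\mu_s$ the number of blocks meeting the $t$-subset in exactly $s$ positions) forces each $N_{w'}(\bz)$ to be a constant. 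Then~\eqref{eq:plandecomp} forces $f_w(\bz)$ to be constant as well.

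The main obstacle is invoking this design-theoretic fact about intersection numbers being determined by the design parameters, together with the careful parity and minimum-distance bookkeeping needed to see that the sum is indeed empty in the base range $w\le d+1$. Once these ingredients are in hand, the induction is triangular and closes cleanly.
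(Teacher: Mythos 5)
Your proof is correct. Note, however, that the paper offers no proof of this lemma at all: it is introduced with the sentence ``The following well known fact directly follows from the definition of completely regular code,'' so there is nothing in the source to compare your argument against line by line. What you have written is a complete, self-contained reconstruction of the standard derivation: you correctly establish that a weight-$t$ vector $\bz$ lies in a coset of weight exactly $t$ (the strict inequality handling the $d=2e+2$ case is exactly the point that needs care), invoke the ``alternative'' characterization in Definition~\ref{de:1.1} to get that $A_{w-t}(t)$ is independent of $\bz$, decompose the codewords at distance $w-t$ from $\bz$ by weight and intersection size, and close the induction using the classical fact that the intersection numbers of a $t$-design with a fixed $t$-set are determined by the design parameters via a triangular system. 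All the bookkeeping checks out: codewords of weight $s>w$ are automatically at distance greater than $w-t$ since $m\le t$, the term $\varepsilon_w$ is nonzero only when $w=2t$ (hence only for $w=d$ with $d$ even), and the sum in your equation is genuinely empty for $w\in\{d,d+1\}$. The only cosmetic blemish is the reuse of the symbol $s$ both for codeword weights and for intersection sizes in the displayed triangular system; rename one of them if you write this up. Your argument buys an explicit proof of a statement the paper merely asserts, which is a net gain in rigor.
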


Let $H_m$ denote a binary matrix of size $m\times n$, where $n=2^m-1$, whose
columns are all different nonzero binary vectors of length $m$, i.e., $H_m$
is a parity check matrix of a binary (Hamming) $[n,n-m,3]$-code, denoted by
$\mathcal{H}_m$.
%

Given a code $C$ with minimum distance $d = 2e+1$, denote by $C^*$ the
extended code, i.e., the code obtained from $C$ by adding an overall parity
checking position. In \cite{bas2} it has been shown when an extension of
an uniformly packed code is again uniformly packed. If this happens the
extended code $C^*$ has the following property.

\begin{lemma}\label{lem:2.7}$\cite{bas2}$
Let $C$ be a uniformly packed code of length $n$ with odd minimum
distance $d$ and let $C_d$ be a $t$-design $T(n, d, t, \lambda)$.
If the extended code $C^*$ is uniformly packed, then the set
$C^*_{d+1}$ is a $(t+1)$-design $T(n+1, d+1, t+1, \lambda)$.
\end{lemma}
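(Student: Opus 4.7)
The plan is to verify directly that for every weight-$(t+1)$ vector $\bz\in\F_2^{n+1}$, the number $\mu(\bz)$ of codewords in $C^*_{d+1}$ covering $\bz$ equals $\lambda$. Since $d$ is odd, every element of $C^*_{d+1}$ has one of two forms: $(\bc,1)$ with $\bc\in C_d$, or $(\bc,0)$ with $\bc\in C_{d+1}$; this splitting guides the count.

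The argument divides according to the value of the overall parity coordinate $z_{n+1}$ of $\bz$. If $z_{n+1}=1$, write $\bz=(\bz',1)$ with $|\bz'|=t$. A covering codeword $(\bc,b)\in C^*_{d+1}$ must have $b=1$, forcing $|\bc|=d$ and $\bc$ to cover $\bz'$. Hence $\mu(\bz)$ equals the number $N_d(\bz')$ of weight-$d$ codewords of $C$ covering the weight-$t$ vector $\bz'$, which is $\lambda$ by the $t$-design hypothesis $C_d=T(n,d,t,\lambda)$.

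In the harder case $z_{n+1}=0$, write $\bz=(\bz',0)$ with $|\bz'|=t+1$; both forms can cover $\bz$, giving $\mu(\bz)=N_d(\bz')+N_{d+1}(\bz')$, where $N_w(\bz')$ denotes the number of weight-$w$ codewords of $C$ covering $\bz'$. To analyze this, I observe that $(\bz',0)$ has weight $e+1$ whereas every codeword of $C^*$ has even weight, so all distances from $(\bz',0)$ to $C^*$ have parity $e+1\pmod{2}$. A direct weight-intersection calculation using $d=2e+1$ shows that the only codewords of $C^*$ at distance exactly $e+1$ from $(\bz',0)$ are $\bo$, the $(\bc,1)$ with $\bc\in C_d$ covering $\bz'$, and the $(\bc,0)$ with $\bc\in C_{d+1}$ covering $\bz'$; consequently
\[
\alpha^*_{e+1}\bigl((\bz',0)\bigr)\;=\;1\;+\;N_d(\bz')\;+\;N_{d+1}(\bz').
\]
Inserting this into the uniform-packing identity $\sum_k\beta^*_k\alpha^*_k((\bz',0))=1$ for $C^*$, and using the corresponding identity for $C$ applied to $\bz'$ to control the higher-distance contributions, forces $N_d(\bz')+N_{d+1}(\bz')$ to be independent of $\bz'$. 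A global double-count of incidences between $C^*_{d+1}$ and the weight-$(t+1)$ vectors of $\F_2^{n+1}$, combined with the design formula $|C_d|\binom{d}{t}=\lambda\binom{n}{t}$, then identifies this constant as $\lambda$.

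The main obstacle is the second case: controlling the higher-distance terms $\alpha^*_k((\bz',0))$ for $k>e+1$ so that the uniform-packing relation for $C^*$ collapses to a single equation in $N_d(\bz')+N_{d+1}(\bz')$, and ensuring that the resulting constant matches $\lambda$ (this implicitly uses an auxiliary compatibility identity of the form $|C_{d+1}|(d+1)=|C_d|(n-d)$ furnished by the uniform packing of $C^*$). Once these ingredients are in place, the proof reduces to the two-case analysis above.
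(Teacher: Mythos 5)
The paper does not prove this lemma at all: it is quoted verbatim from \cite{bas2} (Bassalygo--Zinoviev) and used as a black box, so there is no internal proof to compare against. Judged on its own, your attempt is only half a proof. Case 1 ($z_{n+1}=1$) is complete and correct: since $d$ is odd, a weight-$(d+1)$ codeword of $C^*$ covering $(\bz',1)$ must be of the form $(\bc,1)$ with $\bc\in C_d$ covering $\bz'$, and the $t$-design hypothesis gives exactly $\lambda$ of them. Your reduction of Case 2 to the constancy of $N_d(\bz')+N_{d+1}(\bz')$, the parity observation, and the identification $\alpha^*_{e+1}((\bz',0))=1+N_d(\bz')+N_{d+1}(\bz')$ are also correct (note, though, that this step silently assumes $t=e$, which the lemma statement does not require).

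The gap is exactly where you flag it. The uniform-packing identity for $C^*$ at $(\bz',0)$ reads $\sum_{k=0}^{\rho^*}\beta^*_k\alpha^*_k=1$, and in the very situation this paper needs ($d=3$, $e=1$, $\rho^*=s^*=4$) it reduces to $\beta^*_2\alpha^*_2+\beta^*_4\alpha^*_4=1$: one equation in \emph{two} unknowns, since $\alpha^*_4((\bz',0))$ counts codewords of many different weights (weight $4$ meeting $\Supp(\bz')$ in one point, weight $6$ meeting it in two, \dots) and is not controlled by anything you have established. Your proposed fix --- ``using the corresponding identity for $C$ applied to $\bz'$'' --- does not work as stated, because that identity involves yet other unknown quantities such as $\alpha_3(\bz')$ and does not express $\alpha^*_4$ in terms of $\alpha^*_2$. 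What is actually needed here is Delsarte's theorem that for a code of external distance $s$ the whole outer distribution of a vector is an affine function of its first $s$ entries (so that $\alpha^*_4$ becomes an affine function of $\alpha^*_2$ once $\alpha^*_0=\alpha^*_1=\alpha^*_3=0$ is noted), together with a non-degeneracy check on the resulting affine equation; none of this is in your sketch. Finally, your closing double count determines the constant only if you already know $|C_{d+1}|$, and the auxiliary identity $|C_{d+1}|(d+1)=|C_d|(n-d)$ you invoke is itself equivalent to $C^*_{d+1}$ being a $1$-design --- i.e.\ to (a weak form of) the conclusion --- so as written that step is circular. The architecture of the argument is reasonable, but the decisive analytic step in Case 2 is missing, not merely unpolished.
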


Now we give a lemma, which is an strengthening
of a result from \cite{bor3}.

\begin{lemma}\label{lem:2.8}$\cite{bor3}$
Let $C$ be a completely regular linear code of length $n=2^m-1$ with
minimum distance $d = 3$, covering radius $\rho =3$ and
intersection array $(n, b_1, 1; 1, c_2, n)$. Let the dual code
$C^{\perp}$ have nonzero weights $w_i$,~$i=1, 2, 3$. Then the extended
code $C^*$ is completely regular with covering radius $\rho^* = 4$ and
intersection array $(n+1, n, b_1, 1; 1, c_2, n, n+1)$, if
and only if
\EQ
w_1 + w_3 ~=~ 2\,w_2 ~=~ n+1.
\EN
\end{lemma}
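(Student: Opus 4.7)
The plan is to read both implications off the weight spectrum of $(C^*)^\perp$, using the relation between covering radius and external distance provided by Lemma~\ref{lem:2.1}. Since $C$ is completely regular with $\rho=3$, parts (ii)--(iii) of that lemma give $s=\rho=3$, so $C^\perp$ has exactly three nonzero weights $w_1<w_2<w_3$. Choosing a parity-check matrix
\[
H^*=\begin{pmatrix} H & \mathbf{0}\\ \mathbf{1}_n & 1\end{pmatrix}
\]
for $C^*$, one sees that $(C^*)^\perp$ is the disjoint union of the vectors $(\bc,0)$ for $\bc\in C^\perp$ and their complements $(\bc+\mathbf{1}_n,1)$. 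Hence the nonzero weights of $(C^*)^\perp$ lie in
\[
W=\{w_1,w_2,w_3,\,n+1-w_1,\,n+1-w_2,\,n+1-w_3,\,n+1\},
\]
and $n+1$ always occurs (from $(\mathbf{1}_n,1)$) and is distinct from the other six entries since $1\leq w_i\leq n$.

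For the $(\Rightarrow)$ direction, assume $C^*$ is completely regular with $\rho^*=4$. Then Lemma~\ref{lem:2.1}(ii)--(iii) applied to $C^*$ forces the external distance $s^*=4$, so $(C^*)^\perp$ has exactly four nonzero weights. Consequently the increasing triple $(w_1,w_2,w_3)$ must coincide termwise with $(n+1-w_3,n+1-w_2,n+1-w_1)$, which yields $w_1+w_3=2w_2=n+1$. For the $(\Leftarrow)$ direction, assume $w_1+w_3=2w_2=n+1$: then $W\setminus\{n+1\}$ collapses to $\{w_1,w_2,w_3\}$, giving $s^*=4$ and therefore $\rho^*\leq 4$ by Lemma~\ref{lem:2.1}(i).

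The remaining and principal step is to promote $s^*=4$ to complete regularity of $C^*$ with the specified intersection array $(n+1,n,b_1,1;\,1,c_2,n,n+1)$; the equality $\rho^*=4$ will fall out along the way. The outer entries $b_0^*=n+1$, $c_1^*=1$ are forced by $d(C^*)=4$, and $b_3^*=1$, $c_4^*=n+1$ reflect the antipodal pairing $C^*+\bz\leftrightarrow C^*+\bz+\mathbf{1}_{n+1}$ created by the new dual weight $n+1$. To obtain the inner entries $b_1^*=n$, $c_2^*=1$, $b_2^*=b_1$, $c_3^*=c_2$, I would transport the intersection numbers of $C$ along the two-to-one projection of cosets $C^*+(\bv,\epsilon)\mapsto C+\bv$, making this rigorous by combining the design structure of $C_3$ and $C^*_4$ (Lemmas~\ref{lem:2.6} and~\ref{lem:2.7}) with the balance identity $b_i^*\mu_i^*=c_{i+1}^*\mu_{i+1}^*$ of Lemma~\ref{lem:2.2}, so as to force the distance distribution of every coset of $C^*$ to depend only on its weight. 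The main obstacle lies precisely here: uniform packing of $C^*$ alone is not sufficient for complete regularity, so one must exploit the extra dual symmetry encoded in $w_1+w_3=2w_2=n+1$ to show that the two cosets above a common coset of $C$ share the same distance distribution.
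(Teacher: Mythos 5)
Your handling of the dual--weight bookkeeping is correct and is essentially the paper's own first step: the paper also passes to $H^*$, notes that $w_1+w_3=2w_2=n+1$ is exactly the condition for $(C^*)^\perp$ to have four nonzero weights (hence $s^*=4$), and runs Lemma~\ref{lem:2.1} in both directions just as you do. The difficulty is that your proof stops where the paper's proof actually begins. Everything after ``the remaining and principal step'' is a plan rather than an argument: you say you ``would transport the intersection numbers along the two-to-one projection,'' and then you correctly observe that uniform packing alone does not imply complete regularity --- but you never resolve the obstacle you name, so the central claim of the lemma (that $C^*$ is completely regular with the stated array) is not established. The paper closes this gap concretely: $b_1^*=b_0=n$ and $c_1^*=1$ come from $d(C^*)=4$; $b_2^*=b_1=n-1-2\lambda$ follows because $C_3$ is a $T(n,3,1,\lambda)$ design (Lemma~\ref{lem:2.6} together with Theorem~1 of \cite{bor3}); and $c_2^*=c_2$ is proved by splitting a weight-two coset leader of $C^*$ into the two cases $(0,\bx)$ and $(1,\bx)$ and invoking Lemma~\ref{lem:2.7}, which upgrades $C^*_4$ to a $2$-design precisely because $C^*$ has already been shown to be uniformly packed. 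That $2$-design property is what makes the neighbor count independent of the representative and, in particular, equal for the two cosets of $C^*$ lying over a given coset of $C$ --- this is exactly the ``extra dual symmetry'' you point to, but it must be used, not merely mentioned.

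A secondary problem: the inner entries you propose to verify, namely $c_2^*=1$ and $c_3^*=c_2$, do not match the array in the statement, which requires $c_2^*=c_2$ and $c_3^*=n$. The correct correspondence between the arrays of $C$ and $C^*$ is $b_1^*=b_0=n$, $b_2^*=b_1$, $b_3^*=b_2=1$ and $c_2^*=c_2$, $c_3^*=c_3=n$, with the new outer entries $b_0^*=c_4^*=n+1$. Since these are precisely the quantities your ``transport'' step is supposed to produce, the misindexing would derail the computation even if it were carried out.
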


\begin{proof}
Let $C$ be given by a parity check matrix $H$. The parity check matrix
$H^*$ of the extended code $C^*$ is obtained from $H$ by adding the
zero column and then the all-one vector. From the condition
$w_1 + w_3 = 2\,w_2 = n+1$, we conclude that the external distance
$s^*$ of $C^*$ equals $s^* = s+1 = 4$. Since $\rho^* = \rho + 1 = 4$
(Lemma \ref{lem:2.1}, (i)), we deduce that $s^* = \rho^*$ and $C^*$ is
uniformly packed (Lemma \ref{lem:2.1},(ii)). If the equalities
$w_1 + w_3 = 2\,w_2 = n+1$ are not satisfied we will have $s^*>4$,
and the code $C^*$ is not
even uniformly packed, and hence it is not completely regular
(Lemma \ref{lem:2.1},(iii)).

To complete the proof, it is enough to compute the intersection array
of $C^*$, which we denote by $(b_0^*,b_1^*,b_2^*,b_3^*;c_1^*,c_2^*,c_3^*,c_4^*)$.

By definition
$$
b^*_0 = n+1,\;\;c^*_4 = n+1.
$$
Since $C^*$ has distance $d^*=4$ we have:
$$
b^*_1 = b_0 = n,\;\;c^*_1 = c_1 = 1.
$$

Since codewords of weight $3$ of $C$
form a design $T(n,3,1,\lambda)$ (Lemma \ref{lem:2.6}) we have that
$b_2^*=b_1 = n-1 - 2\,\lambda$ (Theorem 1 in \cite{bor3}). Now, we show
that $c_2^*=c_2$. Let $\bx\in C(2)$. The number $c_2$ is the number
of cases when the vector $\by$ of weight $3$, at distance one from $\bx$,
is covered by some codewords $\bc\in C$ of weight $4$. Consider $C^*$ and see that  the vector $\bx^*=(0,\,\bx)$ is also in $C^*(2)$. Since
the set of codewords of weight $4$ of $C$ with zero parity check
position is not changed, we conclude that, for this vector $\bx^*$, we have $c_2^*(\bx^*)=c_2$.
Now, for the case when $\bx^*=(1,\,\bx)$
is of weight $2$, we obtain the same value $c_2^*(\bx^*)=c_2$,
for the codewords of $C^*$ of weight $4$ form a $2$-design, i.e., the
number of vectors $\by$, at distance one from $\bx$, covering by some
words from $C^*_4$, does not depend on the choice of $\bx^*$.

Evidently $b_3^*=1=b_2$ and hence $c_3^*=c_3=n$, $c_4^*=n+1$,
finishing the proof.
\end{proof}

\section{Completely regular and completely transitive nested codes}

Recall that $\mathcal{H}_m$ is a binary Hamming code of length $n=2^m-1$.
Assume that $m$ is an even number $m=2u$. Let $q=2^u$, $r=2^u+1$
and $\rr=2^u-1$. We can think of the parity check matrix $H_m$ of
$\mathcal{H}_m$ as the binary representation of
$[\alpha^0, \alpha^1, \ldots, \alpha^{n-1}]$, where $\alpha \in \F_{2^m}$
is a primitive element.

We can present the elements of $\F_{2^m}$ as elements in a quadratic
extension of $\F_{2^u}$. Let $\beta=\alpha^r$  be a primitive
element of $\F_{2^u}$ and let $\F_{2^m}=\F_{2^u}[\alpha]$.

Every element $\gamma\in \F_{2^m}$ can be presented as
$\gamma=\gamma_1+\gamma_2\alpha \in \F_{2^u}[\alpha]$, where
$\gamma_1, \gamma_2 \in \F_{2^u}$. The matrix $H_m$ can also
be written as the binary matrix of size $(2u\times n)$,
where the columns are binary presentations of $[\gamma_i,\gamma_j]$
with $\gamma_i,\gamma_j\in \{0, \beta^1,\ldots, \beta^{q-1}\}$.

\begin{definition}\label{det}
For a given $a=\gamma_1+\gamma_2\alpha$ and $b=\gamma'_1+\gamma'_2\alpha$
from $\F_{2^u}[\alpha]$, define the determinant of $a,b$ in $\F_{2^u}$
as
$$
\Det_u(a,b)= \det \left[ \begin{array}{cc} \gamma_1 &\gamma'_1 \\
\gamma_2 &\gamma'_2 \end {array}\right]= \gamma_1\gamma_2'+\gamma_1'\gamma_2.
$$
\end{definition}

The above definition is the usual definition of determinant.
For a homomorphism
$g:\F_{2^u}^2 \longrightarrow \F_{2^u}^2$ and any two elements $a$ and $b$
from $\F_{2^u}[\alpha]$ we have $\det(g(a),g(b))=\det(g)\det_u(a,b)$,
where $\det(g)$ is the determinant of the matrix defining this homomorphism.
So, if
$$
g=\left[
\begin{array}{cc} g_1 &g'_1 \\
g_2 &g'_2 \end {array}\right],
$$
then $\det(g) = g_1g_2' + g_1'g_2$.

Let $E_m$ be the binary representation of the matrix
$[\alpha^{0r},\alpha^{r},\ldots,\alpha^{(n-1)r}]$. Take the matrix $P_m$
as the vertical join of $H_m$ and $E_m$.

It is well known~\cite{cal} that the code $C^{(u)}$ with parity check
matrix $P_m$ is a cyclic binary completely regular code with covering
radius $\rho=3$, minimum distance $d=3$ and dimension $n-(m+u)$. The
generator polynomial of $C^{(u)}$ is $g(x)=m_\alpha(x)m_{\alpha^{r}}(x)$,
where $m_{\alpha^i}(x)$ means the minimal polynomial associated to $\alpha^i$.

Denote by $\be_i$ the vector with only one nonzero coordinate of value $1$
in position \textit{i}th. Binary vectors $\bv\in \F_2^n$ can be written as
$\bv=\sum_{i\in I_\bv} \be_i$, where $I_\bv=\Supp(\bv)$.
Finite fields $\F_{2^m}$ and $\F_{2^u}[\alpha]$ are isomorphic and so the
elements in $\F_{2^m}$ can be seen as elements in $\F_{2^u}[\alpha]$.
The positions of vectors in $\F_2^n$ can be enumerated by using the
nonzero elements in $\F_{2^m}$  which, in turn, can be seen as elements
in $\F_{2^u}[\alpha]$ by substituting any $\alpha^i\in \F_{2^m}$ with the
corresponding $\alpha^i = \gamma_{i1}+\gamma_{i2}\alpha\in \F_{2^u}[\alpha]$,
where $\gamma_{i1},\gamma_{i2}\in \F_{2^u}$.

For $\bv\in\F_2^n$, vector $H_m\bv^T$ belongs to $\F_2^m$, however we can
consider the representation of it as an element of $\F_{2^u}[\alpha]$,
depending on the context we use the first or the second representation
for $H_m\bv^T$. Clearly we have
$$
H^{}_m \bv^T=H_m (\sum_{i\in I_\bv} \be_i)^T
=\sum_{i\in I_\bv} \alpha^i
=\sum_{i\in I_\bv}(\gamma_{i1}+\gamma_{i2}\alpha).
$$

For any $\bv=\sum_{i\in I_\bv} \be_i\in \F_2^n$, denote
$S(\bv)=\sum_{i\in I_\bv}\gamma_{i1}\gamma_{i2}\in \F_{2^u}$. The next lemma
gives a new description for the code $C^{(u)}$.

\begin{lemma}\label{prop:descrip}
The code $C^{(u)}$ consists of elements $\bv\in \F_2^n$, such that
$H_m \bv^T=0$ and $S(\bv)=0$.
\end{lemma}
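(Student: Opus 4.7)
The plan is to unpack the defining parity check $P_m\bv^T = 0$ into its two blocks $H_m\bv^T=0$ and $E_m\bv^T=0$, and to show that, once the first block holds, the second is equivalent to $S(\bv)=0$. The first block already gives one of the two required conditions, so all the work is in rewriting the second block in terms of the coordinate data $(\gamma_{i1},\gamma_{i2})$.

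The key computation is to expand $\alpha^{ir}$ in the $\F_{2^u}$-basis $\{1,\alpha\}$. Since $r=2^u+1$, I would write
\[
\alpha^{ir} = (\alpha^i)^{2^u+1} = \alpha^i \cdot (\alpha^i)^{2^u},
\]
and use that the Frobenius $x\mapsto x^{2^u}$ fixes $\F_{2^u}$ pointwise, so that $(\gamma_{i1}+\gamma_{i2}\alpha)^{2^u} = \gamma_{i1}+\gamma_{i2}\alpha^{2^u}$. Multiplying out gives
\[
\alpha^{ir} \;=\; \gamma_{i1}^{2} \;+\; t\,\gamma_{i1}\gamma_{i2} \;+\; \beta\,\gamma_{i2}^{2},
\]
where $t:=\alpha+\alpha^{2^u}$ and $\beta=\alpha^{r}\in\F_{2^u}$. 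Note that $t\in\F_{2^u}$ (it is fixed by the Frobenius) and, since $\alpha$ is primitive in $\F_{2^m}$ with $m=2u$, we have $\alpha\notin\F_{2^u}$, hence $\alpha\neq\alpha^{2^u}$ and $t\neq 0$ — this small nondegeneracy check is the only subtle point in the whole argument.

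Summing the displayed identity over $i\in I_\bv$ yields
\[
\sum_{i\in I_\bv}\alpha^{ir} \;=\; \Bigl(\sum_{i\in I_\bv}\gamma_{i1}\Bigr)^{2} \;+\; t\,S(\bv) \;+\; \beta\,\Bigl(\sum_{i\in I_\bv}\gamma_{i2}\Bigr)^{2},
\]
using characteristic $2$ to pull the squares outside the sums. Now the condition $H_m\bv^T=0$ is precisely $\sum_{i\in I_\bv}(\gamma_{i1}+\gamma_{i2}\alpha)=0$ in $\F_{2^u}[\alpha]$, which splits into the two $\F_{2^u}$-equations $\sum\gamma_{i1}=0$ and $\sum\gamma_{i2}=0$. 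Therefore the two outer terms vanish, and the right-hand side collapses to $t\,S(\bv)$. Since $t\neq 0$, the second block $E_m\bv^T=0$ (equivalently $\sum\alpha^{ir}=0$) is equivalent to $S(\bv)=0$, which together with $H_m\bv^T=0$ characterises membership in $C^{(u)}$.

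The main obstacle is really just the algebraic rearrangement: one must resist the temptation to think of $E_m\bv^T$ as an independent $\F_{2^m}$-constraint, and instead exploit the quadratic extension structure together with Frobenius to cancel two of the three resulting terms against the Hamming constraint. Once that cancellation is visible, the lemma is immediate.
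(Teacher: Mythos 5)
Your proof is correct and follows essentially the same route as the paper: split $P_m\bv^T=0$ into the $H_m$ and $E_m$ blocks, expand $\alpha^{ir}=(\alpha^i)^{2^u}\alpha^i$ via the Frobenius, and use $\sum\gamma_{i1}=\sum\gamma_{i2}=0$ to cancel the square terms, leaving $(\alpha+\alpha^{2^u})S(\bv)$. The only difference is that you explicitly justify the nondegeneracy $\alpha+\alpha^{2^u}\neq 0$ (via $\alpha\notin\F_{2^u}$), which the paper asserts without comment.
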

\begin{proof}
By definition, a binary vector $\bv$ belongs to $C^{(u)}$, if and only if
$P_m\bv^T=0$, implying $H_m\bv^T=0$ and $E_m\bv^T=0$. Taking
the vector $\bv=\sum_{i\in I_\bv} \be_i$, we are going to prove that conditions
$H_m\bv^T=0$ and $E_m\bv^T=0$ (i.e.,
$\sum_{i\in I_\bv} (\alpha^{i})^r =0$) lead to $S(\bv)=0$.

From the first condition we have $0=H^{}_m\bv^T=\sum_{i\in I_\bv} \gamma_{i1}+\gamma_{i2}\alpha$,
implying that $\sum_{i\in I_\bv} \gamma_{i1}=0$ and $\sum_{i\in I_\bv} \gamma_{i2}=0$.
It also gives $\sum_{i\in I_\bv} \gamma_{i1}^2=0$ and $\sum_{i\in I_\bv} \gamma_{i2}^2=0$.

Now consider the second one:
$$
E^{}_m\bv^T=\sum_{i\in I_\bv} \alpha^{ir} = \sum_{i\in I_\bv} (\gamma_{i1}+\gamma_{i2}\alpha)^r.
$$
Since $r=2^u+1$ and $\gamma_{ik}^{2^u} = \gamma_{ik}$ for $k=1,2$, we obtain
\begin{eqnarray*}
E^{}_m\bv^T
&=&\sum_{i\in I_\bv} (\gamma_{i1} +\gamma_{i2}\alpha)^{2^u}(\gamma_{i1} +\gamma_{i2}\alpha)\\
&=&\sum_{i\in I_\bv} (\gamma_{i1} +\gamma_{i2}\alpha^{2^u})(\gamma_{i1} +\gamma_{i2}\alpha)\\
&=&\sum_{i\in I_\bv} (\gamma_{i1}^2 +\gamma_{i2}^2\beta+ \gamma_{i1}\gamma_{i2}(\alpha+\alpha^{r-1}))
\end{eqnarray*}
(recall that $\beta=\alpha^r$) and, since $H_m\bv^T=0$ and $\alpha+\alpha^{r-1}\not=0$, we finally obtain $E^{}_m\bv^T= 0$
if and only if
$$S(\bv)=\sum_{i\in I_\bv} \gamma_{i1}\gamma_{i2}=0.
$$
 \end{proof}

The code $C^{(u)}$ is a binary $[n=2^m-1,k=n-m-u]$ code and it is a subcode of the
$[2^m-1,n-m]$ Hamming code $\mathcal{H}_m$. Now we show, that
$C^{(u)}$ is not only completely regular ~\cite{cal}, but
also completely transitive.

An isomorphism $\Phi: \F_{2^u}^2\longrightarrow \F_{2^u}^2$ is given by
a ($2\times 2$)-matrix over $\F_{2^u}$,
$$
\Phi = \left[
\begin{array}{cc}
a~&~a'\\
b~&~b'
\end{array}
\right],
$$
with nonzero determinant $\det(\Phi) = ab'+a'b \neq 0$, such that
$$\Phi (\gamma_{i1}, \gamma_{i2})^T = (a\gamma_{i1}+a'\gamma_{i2},\;b\gamma_{i1}+b'\gamma_{i2})^T =
(\gamma_{j1}, \gamma_{j2})^T.$$

The above isomorphism $\Phi$ induces a permutations  of columns, denoted by $\varphi$,
where the column $\alpha^i = \gamma_{i1} + \gamma_{i2} \alpha$ is moved under the action
of $\varphi$  to the column $\alpha^j = \gamma_{j1} + \gamma_{j2} \alpha$, i.e., $\varphi((\gamma_{i1}, \gamma_{i2})^T)=(\gamma_{j1}, \gamma_{j2})^T$.

The above presentation implies that the general linear group $\operatorname{GL}_2(2^u)$
stabilizes $C^{(u)}$.

\begin{proposition}\label{aut}
The automorphism group of $C^{(u)}$  contains the linear group
$\operatorname{GL}_2(2^u)$, so
$\langle GL_2(2^u)\rangle \subseteq \operatorname{Aut}(C^{(u)})$.
\end{proposition}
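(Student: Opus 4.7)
The plan is to verify directly, using the characterization of $C^{(u)}$ from Lemma \ref{prop:descrip}, that any permutation $\varphi$ of coordinates coming from a linear map $\Phi \in \operatorname{GL}_2(2^u)$ on the column vectors preserves the two defining conditions $H_m \bv^T = 0$ and $S(\bv) = 0$.

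Fix $\Phi = \left[\begin{smallmatrix} a & a' \\ b & b' \end{smallmatrix}\right] \in \operatorname{GL}_2(2^u)$, so $\det(\Phi) = ab' + a'b \neq 0$. By construction, $\varphi$ sends the position labeled by $\alpha^i = \gamma_{i1} + \gamma_{i2}\alpha$ to the position labeled by the column $\Phi(\gamma_{i1},\gamma_{i2})^T = (a\gamma_{i1}+a'\gamma_{i2},\; b\gamma_{i1}+b'\gamma_{i2})^T$. Since $\Phi$ is a bijection on $\F_{2^u}^2 \setminus \{0\}$, $\varphi$ is a bona fide permutation of the coordinate set $J$. Take $\bv = \sum_{i \in I_\bv}\be_i \in C^{(u)}$; I need to show $\varphi(\bv) \in C^{(u)}$.

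For the first condition, linearity gives
\[
H_m\,\varphi(\bv)^T \;=\; \sum_{i\in I_\bv} \Phi(\gamma_{i1},\gamma_{i2})^T \;=\; \Phi\bigl(H_m\bv^T\bigr) \;=\; 0.
\]
For the second condition, expand
\[
S(\varphi(\bv)) \;=\; \sum_{i\in I_\bv}(a\gamma_{i1}+a'\gamma_{i2})(b\gamma_{i1}+b'\gamma_{i2}) \;=\; ab\!\sum_i\gamma_{i1}^2 + a'b'\!\sum_i\gamma_{i2}^2 + (ab'+a'b)\!\sum_i\gamma_{i1}\gamma_{i2}.
\]
The third summand equals $\det(\Phi)\,S(\bv) = 0$ since $\bv \in C^{(u)}$. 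For the first two summands I exploit characteristic $2$: from $H_m\bv^T = 0$ we get $\sum_i \gamma_{i1} = \sum_i \gamma_{i2} = 0$, and applying the Frobenius $x \mapsto x^2$ yields $\sum_i \gamma_{i1}^2 = \sum_i \gamma_{i2}^2 = 0$. Hence $S(\varphi(\bv)) = 0$ and $\varphi(\bv) \in C^{(u)}$.

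There is no real obstacle here: the argument is a direct computation once Lemma \ref{prop:descrip} is in hand. The only subtle point to flag is the use of the Frobenius identity $(\sum x_i)^2 = \sum x_i^2$ in characteristic $2$, which is exactly what lets the linear parity conditions automatically kill the quadratic self-terms. Finally, since every $\Phi \in \operatorname{GL}_2(2^u)$ gives such a $\varphi \in \Aut(C^{(u)})$ and the assignment $\Phi \mapsto \varphi$ is a group homomorphism (with kernel trivial on nonzero vectors), the inclusion $\operatorname{GL}_2(2^u) \hookrightarrow \Aut(C^{(u)})$ follows.
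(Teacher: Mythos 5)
Your proof is correct and follows essentially the same route as the paper: verify via Lemma \ref{prop:descrip} that the permutation $\varphi$ induced by $\Phi$ preserves both $H_m\bv^T=0$ (by linearity) and $S(\bv)=0$ (by expanding the quadratic form, killing the square terms with the Frobenius identity in characteristic $2$, and observing the cross term is $\det(\Phi)\,S(\bv)$). Your version is in fact slightly cleaner, since you keep the row/column conventions for $\Phi$ consistent throughout, whereas the paper's displayed expansion of $S(\varphi(\bv))$ swaps them (harmlessly, as the cross term $(ab'+a'b)\gamma_{i1}\gamma_{i2}$ is unchanged).
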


\begin{proof}
Let $\Phi \in \operatorname{GL}_2(2^u)$ and, as we said before, consider the
associated permutation $\varphi\in \operatorname{S}_n$. We want to see that $\varphi\in \operatorname{Aut}(C^{(u)})$.

Let $\bv = \sum_{i\in I_\bv} \be_i \in C^{(u)}$, hence  from Lemma \ref{prop:descrip},
$H_m\bv^T=0$ and $S(\bv)=0$. Thus,
$$
\sum_{i\in I_\bv} \gamma_{i1}=0,\;\;\sum_{i\in I_\bv} \gamma_{i2}=0\;\;\mbox{and}\;\;
\sum_{i\in I_\bv} \gamma_{i1}\gamma_{i2}=0,
$$
where for $i\in \{0,\ldots,n-2\}$ we have
$\alpha^i = \gamma_{i1}+\gamma_{i2}\alpha \in \F_{2^u}[\alpha]$. Also we have
$\sum_{i\in I_\bv} \gamma_{i1}^2=0$ and $\sum_{i\in I_\bv} \gamma_{i2}^2=0$.

Now we have to prove that $H_m(\varphi(\bv))^T = S(\varphi(\bv))=0$. We obtain
$$
H_m(\varphi(\bv))^T = \sum_{i\in I_\bv} \Phi(\gamma_{i1},\gamma_{i2})^T =
\sum_{i\in I_\bv} (a\gamma_{i1}+a'\gamma_{i2}) +(b\gamma_{i1}+b'\gamma_{i2})\alpha=0.
$$
and
\begin{eqnarray*}
S(\varphi(\bv))&=&\sum_{i\in I_\bv} (a\gamma_{i1}+b\gamma_{i2})(a'\gamma_{i1}+b'\gamma_{i2})\\
&=&\sum_{i\in I_\bv} aa'\gamma_{i1}^2+bb'\gamma_{i2}^2+ (ab'+a'b)\gamma_{i1}\gamma_{i2}\\
&=&\det(\Phi)\sum_{i\in I_\bv} \gamma_{i1}\gamma_{i2} = 0.
\end{eqnarray*}
\end{proof}

\begin{proposition}
The automorphism group of $C^{(u)}$ gives four orbits on the cosets of
$C^{(u)}$ in $\F_2^n$ and so $C^{(u)}$ is a completely transitive code.
\end{proposition}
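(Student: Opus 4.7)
The plan is to show that the four coset-weight classes $\{0,1,2,3\}$ each form a single orbit under the subgroup $\operatorname{GL}_2(2^u) \subseteq \operatorname{Aut}(C^{(u)})$ exhibited in Proposition \ref{aut}. Since any permutation automorphism preserves coset weight, the number of orbits is at least $\rho + 1 = 4$, so proving transitivity on each weight class yields exactly four orbits and establishes complete transitivity.

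The cases of weights $0$ and $1$ are immediate. The zero coset is the unique coset of weight $0$. For weight $1$, $\operatorname{GL}_2(2^u)$ acts transitively on $\F_{2^u}^2 \setminus \{0\}$, so the induced permutation $\varphi$ is transitive on the $n = 2^m - 1$ coordinates and hence on the cosets $C^{(u)} + \be_i$.

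For weight $2$, I would first check that every weight-$2$ coset admits a representative $\be_i + \be_j$ whose associated columns $v_i = (\gamma_{i1}, \gamma_{i2})$ and $v_j = (\gamma_{j1}, \gamma_{j2})$ are linearly independent in $\F_{2^u}^2$. Otherwise $v_j = \lambda v_i$ with $\lambda \in \F_{2^u}\setminus\{0,1\}$, and a direct check using the two defining conditions of Lemma \ref{prop:descrip} shows that $\be_i + \be_j + \be_l \in C^{(u)}$ for the unique $l$ with $v_l = (1+\lambda) v_i$, so the coset would actually contain the weight-$1$ vector $\be_l$, a contradiction. Since $\operatorname{GL}_2(2^u)$ acts simply transitively on ordered bases of $\F_{2^u}^2$, for any two weight-$2$ cosets $C^{(u)} + \be_{i_1} + \be_{i_2}$ and $C^{(u)} + \be_{j_1} + \be_{j_2}$ the unique $\Phi$ with $\Phi v_{i_\ell} = v_{j_\ell}$ produces an automorphism $\varphi$ exchanging them.

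For weight $3$, I would exploit the index-$2^u$ embedding $C^{(u)} \subseteq \mathcal{H}_m$. The $2^u$ cosets of $C^{(u)}$ contained in $\mathcal{H}_m$ all satisfy $H_m \bv^T = 0$; one is $C^{(u)}$ itself, and each of the other $2^u - 1$ has minimum weight at least $3$ because $\mathcal{H}_m$ has minimum distance $3$, and therefore weight exactly $3$ since $\rho = 3$. On the other hand, Lemma \ref{lem:2.2} applied to the intersection array $(n, b_1, 1; 1, c_2, n)$ of $C^{(u)}$ gives $\mu_2 = n \mu_3$; combined with $\mu_0 = 1$, $\mu_1 = n$, and $\sum_{i=0}^{3} \mu_i = 2^{m+u}$, this forces $\mu_3 = 2^u - 1$. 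Hence every weight-$3$ coset already lies inside $\mathcal{H}_m$, so on any representative $H_m \bv^T = 0$, and the computation in the proof of Proposition \ref{aut} simplifies to $S(\varphi(\bv)) = \det(\Phi)\,S(\bv)$. Surjectivity of $\det : \operatorname{GL}_2(2^u) \to \F_{2^u}^*$ then yields transitivity on the $2^u - 1$ weight-$3$ cosets. I expect the main obstacle to be this last case, where one must first identify weight-$3$ cosets with the nonzero cosets of $C^{(u)}$ in $\mathcal{H}_m$ and then invoke the clean transformation formula for $S$ that is only valid when $H_m \bv^T = 0$; after that, transitivity is a one-line consequence of the surjectivity of $\det$.
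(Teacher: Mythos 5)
Your proposal is correct and follows essentially the same route as the paper: both classify the cosets by weight and use the $\operatorname{GL}_2(2^u)$ subgroup of Proposition~\ref{aut}, acting transitively on nonzero columns (weight $1$), on linearly independent column pairs (weight $2$), and, via surjectivity of the determinant onto $\F_{2^u}^*$ together with $S(\varphi(\bv))=\det(\Phi)S(\bv)$, on the weight-$3$ cosets. The only difference is that you supply explicit justifications for two facts the paper asserts more directly --- that weight-$2$ representatives have linearly independent columns (the paper gets this from the identity $S(a+b)=S(a)+S(b)+\det_u(a,b)$) and that weight-$3$ cosets satisfy $H_m\bv^T=0$ (which you obtain by counting $\mu_3$ via Lemma~\ref{lem:2.2}, while the paper reads it off the syndrome description).
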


\begin{proof}
Denote the syndrome of any vector $\bv\in \F_2^n$ as $[h,e]$,
where $e=S(\bv)\in \F_{2^u}$ and $h=H^{}_m\bv^T$. Since $C^{(u)}$ has
covering radius $\rho=3$ we have four different classes of
cosets of $C^{(u)}$ depending on their weight. The coset of weight $0$
coincides with $C^{(u)}$, so its vectors  have syndrome $[0,0]$. The
cosets of weight $1$ are those with syndrome $[h,e]$, where
$h=H^{}_m\bv^T$ for
some vector $\bv$ of weight one such that $e=S(\bv)$, hence a total
of $2^m-1=r\rr$ cosets. Since $\operatorname{GL}_2(\F_{2^u})$ is
transitive over the set $\{\be_i:1\leq i \leq n-1\}$ the orbit of a vector
in a coset of weight one covers all cosets of weight one.

The cosets of weight $3$ are those with syndrome $[h,e]$, where
$h=H^{}_m\bv^T=0$ and $e = S(\bv)\not=0$. As we saw in the preamble
of Lemma~\ref{prop:descrip}, $e\in  \F_{2^u}$, so it has $\rr$
possible values and there are a total of $\rr$ cosets of weight $3$.
Like for the above case when the cosets are of weight $1$, the orbit
of a vector in a coset of weight $3$ contains all cosets of weight $3$.
Indeed, from Proposition~\ref{aut} there exists an automorphism
with the appropriate determinant which takes $e$ to any other possible $e'$.

The cosets of weight $2$ are those with syndrome $[h,e]$, where
$h=H^{}_m\bv^T\not= 0$ and
$e\in \F_{2^u}\backslash \{z\}$, where $z=S(\bv)$. Hence, a
total of $(2^m-1)\rr = r\rr^2$ cosets.  The representatives
in all cosets of weight $2$ are vectors of weight two, which can be
seen as pairs $a,b$, where $a=(a_1,a_2)\in \F_{2^u}^2$, $b=(b_1,b_2)\in \F_{2^u}^2$,
such that $h=a+b$ and $S(h)\not=S(a)+S(b)$.
We have
$$
S(h) = S(a_1+b_1, a_2+b_2)) = (a_1+b_1)(a_2+b_2) = S(a)+S(b)+det_u(a,b),
$$
so the condition $S(h)\not=S(a)+S(b)$ is equivalent to the condition $\det_u(a,b)\not=0$.
Therefore, the cosets of weight two are those with representative
pairs $a,b$ with $\det_u(a,b)\not=0$. Given two pairs $a,b$ and $c,d$,
with $\det_u(a,b)\not=0$ and $\det_u(c,d)\not=0$ from Proposition~\ref{aut}
we always can find an
isomorphism of $\F_{2^u}^2$ taking $a,b$ to $c,d$ and so, an automorphism
of $C^{(u)}$ sending the coset with representative pair $a,b$ to the
coset with representative pair $c,d$.
\end{proof}

As we know, the number of cosets $C^{(u)}+\bv$, of weight three, is $\rr$.
Indeed, their syndromes $S(\bv)$ are the nonzero elements of $\F_{2^u}$.
For $i\in \{0,\ldots,u\}$, taking $u-i$ cosets $C^{(u)}+\bv_1, \ldots, C^{(u)}+\bv_{u-i}$
with independent syndromes
$S(\bv_1), \ldots, S(\bv_{u-i})$ (independent, means that they are
independent binary vectors in $\F_2^u$) we can generate a linear binary
code $C^{(i)}=\langle C^{(u)}, \bv_1, \ldots \bv_{u-i}\rangle$.

The dimension of code $C^{(i)}$ is $\dim(C^{(i)})= u-i+\dim(C^{(u)})$,
where $\dim(C^{(u)})=n-m-u$. Note that the maximum number of independent
syndromes we can take is $u$, so the biggest code we can obtain is of
dimension $u+\dim(C^{(u)})=n-m$, which is the Hamming code $C^{(0)}=\mathcal{H}_m$.
All the constructed codes contains  $C^{(u)}$ and, at the same time,
they are contained in the Hamming code $C^{(0)}$.

The number of codes $C^{(u-i)}$ equals the number of subspaces of dimension
$i$ we can take in $\F_2^u$, so the Gaussian binomial coefficient
$$
|\{C^{(u-i)}\}|=\binom{u}{i}_2=\frac{(2^u-1)(2^u-2)\cdots(2^u-2^{i-1})}{(2^i-1)(2^i-2)\cdots(2^i-2^{i-1})}.
$$
Taking all the possibilities, we are able to construct several nested families
of codes between $C^{(u)}$ and $C^{(0)}=\mathcal{H}_m$. In fact, it is easy to
compute that there are
$$
\prod_{i=0}^{u-1} (2^{u-i}-1)
$$
different families.

All these codes $C^{(i)}$ are completely regular as we show later in
Theorem~\ref{CRIA}. We have seen that $C^{(u)}$
and $C^{(0)}$ are completely transitive and, in addition we show that also
$C^{(1)}$ is also a completely transitive code.

\begin{proposition}
The automorphism group of $C^{(1)}$ induces $4$ orbits on the cosets of $C^{(1)}$
in $\F_2^n$ and so $C^{(1)}$ is a completely transitive code.
\end{proposition}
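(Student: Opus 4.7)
The plan is to imitate the proof of Proposition~\ref{aut}, now working with the subgroup $\operatorname{SL}_2(2^u) \subseteq \operatorname{GL}_2(2^u)$ consisting of those $\Phi$ that additionally preserve the extra linear constraint defining $C^{(1)}$. First I would describe $C^{(1)}$ by parity checks: since $\dim C^{(1)} = n - m - 1$, there exists a nonzero $\F_2$-linear functional $\ell : \F_{2^u} \to \F_2$ such that
$$
C^{(1)} ~=~ \{\, \bv \in \F_2^n : H_m\bv^T = 0 \text{ and } \ell(S(\bv)) = 0 \,\},
$$
the hyperplane $V := \ker\ell$ being spanned by $S(\bv_1), \ldots, S(\bv_{u-1})$.

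Next I would verify that $\operatorname{SL}_2(2^u) \subseteq \Aut(C^{(1)})$. The identities already computed in the proof of Proposition~\ref{aut} give $H_m \varphi(\bv)^T = \Phi(H_m\bv^T)$ and, whenever $H_m\bv^T = 0$, $S(\varphi(\bv)) = \Det(\Phi)\, S(\bv)$. Hence $\varphi$ stabilizes $C^{(1)}$ exactly when $\Det(\Phi)\cdot V = V$. Realising $\ell$ as a trace form on $\F_{2^u}$, the key observation is that the multiplicative stabilizer of any $\F_2$-hyperplane of $\F_{2^u}$ is trivial (two nonzero $\F_2$-linear functionals share the same kernel only if they are equal, because $\F_2^*=\{1\}$), which forces $\Det(\Phi)=1$. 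This isolates exactly $\operatorname{SL}_2(2^u)$ inside $\operatorname{GL}_2(2^u)$.

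Finally I would enumerate cosets of $C^{(1)}$ by the syndrome $[h,e] \in \F_{2^m} \times \F_2$, obtaining $2^{m+1}$ cosets in total. A short count using $d(C^{(1)})=3$ and $C^{(u)} \subset C^{(1)} \subset \mathcal{H}_m$ yields $\mu_0 = \mu_3 = 1$ (the unique weight-three coset being the one with syndrome $(\bo, 1)$) and $\mu_1 = \mu_2 = n$, accounting for four distinct weight classes and confirming $\rho(C^{(1)}) = 3$. Since coordinate permutations preserve Hamming weight and $\operatorname{SL}_2(2^u)$ acts transitively on $\F_{2^u}^2\setminus\{\bo\}$, transitivity within each nontrivial weight class is immediate via the $h$-component (weight-one and weight-two cosets are each uniquely indexed by their nonzero $h$), while the weight-zero and weight-three classes are singletons. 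Hence $\Aut(C^{(1)})$ induces at most four orbits and, by the weight invariant, exactly four, which is the desired complete transitivity. The main obstacle will be the multiplicative-stabilizer computation for $V$; once in hand, the rest is a direct adaptation of the argument for $C^{(u)}$.
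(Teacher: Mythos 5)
Your proposal is correct and follows essentially the same route as the paper: characterize $C^{(1)}$ by an extra $\F_2$-valued check on $S(\bv)$ (your functional $\ell$ is the paper's projection $S_1$ onto $A_1$), show $\operatorname{SL}_2(2^u)\subseteq\Aut(C^{(1)})$ because determinant-one maps preserve $S$ exactly on the kernel of $H_m$, and then obtain the four orbits from transitivity on the nonzero $h$-syndromes (which uniquely index the weight-one and weight-two cosets) together with the uniqueness of the weight-three coset. Your additional observation that the multiplicative stabilizer of the hyperplane $V$ is trivial, so that exactly $\operatorname{SL}_2(2^u)$ survives from $\operatorname{GL}_2(2^u)$, is correct but not needed for the conclusion.
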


\begin{proof}
Let $\F_2^u=A_{u-1}\oplus A_1$ be the decomposition of the binary linear space
$\F_2^u$ as a direct sum of subspaces and let $S_1(\bv)$ be the projection of
$S(\bv)$ over $A_1$. By definition of $C^{(1)}$ the elements $\bv\in C^{(1)}$ are
those such that $H_m\bv^T=0$ and $S(\bv)$
belongs to a subspace $A_{u-1} \subset \F_2^u$ of dimension $u-1$ over $\F_2$.
Therefore, the elements of $C^{(1)}$ can be characterized
by the syndrome $h=H^{}_m\bv^T=0$ and $e=S_1(\bv)=0$. Following the same
argumentation and computations as in Proposition~\ref{aut}, we easily obtain
that $\operatorname{SL}_2(2^u)\subset \Aut(C_1)$, where $\operatorname{SL}_2(2^u)$
is the special linear group of automorphisms, so  the normal subgroup of the
general linear group $\operatorname{GL}_2(2^u)$, consisting of those matrices
$\Phi$ with determinant $\det(\Phi)=1$.

The cosets of $C^{(1)}$ of weight $1$ are those with syndrome $[h,e]$, where
$h=H^{}_m\bu^T\in \F_{2^u}[\alpha]\backslash\{0\}$ for some vector
$\bu\not=0$  of weight one such that $e=S_1(\bu)$, hence a total of
$2^m-1=r\rr$ cosets. Since $\operatorname{SL}_2(2^u)$ is transitive
over $\{\be_i:1\leq i \leq n-1\}$ the orbit of a vector in a coset of
weight one contains all cosets of weight one.

There is only one coset of weight $3$, say $C^{(1)}+ \bu$, where
$H^{}_m\bu^T=0$ and $S(\bu)\in A_1$. Hence, there is nothing to
prove, automorphisms of $C^{(1)}$ act transitively over this unique coset.

The cosets of weight $2$ are $C^{(1)}+\bu+\bv$, where $C^{(1)}+\bu$ is
the coset of weight three and $\bv$ is of weight
one. The syndrome of these cosets is  $[h,e]$, where
$h=H^{}_m\bv^T\not=0$ and
$e=S_1(\bu+\bv)=S_1(\bv)$. We have a total of $2^m-1=r\rr$ cosets of weight two.
Like for the cosets of weight one, since $\operatorname{SL}_2(2^u)$ is
transitive over $\{\be_i:1\leq i \leq n-1\}$ the orbit of a vector in a
coset of weight two cover all cosets of weight two.
\end{proof}

As a generalization of the previous proposition we can state, as a conjecture,
the following proposition which needs the exact computation of the automorphism
group of any $C^{(i)}$ to be solved.

\begin{conjecture}
Code $C^{(i)}$ is completely transitive if and only if $i=0, i=1, i=u$ or $2^i \leq u+1$,
for $i\in \{2,\ldots,u-1\}$.
\end{conjecture}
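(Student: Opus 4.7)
The plan is to compute $\Aut(C^{(i)})$ explicitly enough to enumerate its orbits on the cosets of $C^{(i)}$. First I would parameterize the $2^{m+i}$ cosets of $C^{(i)}$ by syndromes $(h,\bar f)\in\F_{2^u}^2\times(\F_2^u/A_{u-i})$, where $h=H_m\bv^T$ and, writing $h=(h_1,h_2)$, $\bar f=(S(\bv)+h_1h_2)+A_{u-i}$. Since the weight-one representative $\be_\ell$ for $h=\alpha^\ell$ satisfies $S(\be_\ell)=\gamma_{\ell 1}\gamma_{\ell 2}=h_1h_2$, the shift vanishes exactly on weight-one cosets. A direct verification then yields the four weight classes: weight $0$ at $(0,0)$; weight $1$ at $h\neq 0$, $\bar f=0$; weight $3$ at $h=0$, $\bar f\neq 0$; weight $2$ at both nonzero, with cardinalities $1$, $2^m-1$, $2^i-1$, $(2^m-1)(2^i-1)$ summing to $2^{m+i}$.

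Next I would adapt Proposition~\ref{aut} to the subgroup
$$
G_i=\{\Phi\in\operatorname{GL}_2(2^u)\;:\;\det(\Phi)\cdot A_{u-i}=A_{u-i}\}\subseteq\Aut(C^{(i)}),
$$
which acts on syndromes by $(h,\bar f)\mapsto(\Phi(h),\det(\Phi)\bar f)$. Since $\operatorname{SL}_2(2^u)\subseteq G_i$ is already transitive on $\F_{2^u}^2\setminus\{0\}$, weight-one cosets form a single $G_i$-orbit. Moreover the $\operatorname{GL}_2(2^u)$-stabilizer of any nonzero $h$ realises every determinant in $\F_{2^u}^*$, so transitivity on weight-two and on weight-three cosets both reduce to the same question: does $D_i:=\{d\in\F_{2^u}^*\;:\;dA_{u-i}=A_{u-i}\}$ act transitively on $(\F_2^u/A_{u-i})\setminus\{0\}$? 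For $i\in\{0,1,u\}$ this holds trivially as in the already-proven cases; for $i\in\{2,\ldots,u-1\}$ one must enlarge $G_i$.

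The natural enlargement is by the Frobenius $\pi:j\mapsto 2j\pmod n$. A direct calculation (using $\alpha^2=c_0+c_1\alpha$ in $\F_{2^u}[\alpha]$ together with $H_m\bv^T=0$) shows that $S(\pi\bv)=c_1\,S(\bv)^2$ for every $\bv\in C^{(i)}$, so $\pi$ preserves $C^{(i)}$ precisely when $A_{u-i}$ is invariant under $x\mapsto c_1x^2$, and its induced action on $\bar f$ for weight-three cosets is by the same map. The orbit of $\bar f$ under $\langle D_i,\pi\rangle$ has size at most $u\cdot|D_i|$, since the Frobenius on $\F_{2^u}$ has order $u$. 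For the sufficient direction with $2^i\leq u+1$, I would choose $A_{u-i}$ as an appropriate Frobenius-stable $\F_2$-subspace (for instance, a hyperplane inside a judiciously chosen subfield) so that this joint action has an orbit of exactly size $2^i-1$; the inequality $2^i-1\leq u$ is precisely what makes such an orbit length available.

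The main obstacle is the converse: for $i\in\{2,\ldots,u-1\}$ with $2^i>u+1$, one must rule out \emph{any} choice of $A_{u-i}$ and \emph{any} column permutation of $C^{(i)}$ producing a transitive action on weight-three cosets. This demands the full computation of $\Aut(C^{(i)})$ that the authors flag as open. The strategy is to show that every automorphism of $C^{(i)}$ also preserves the ambient $\mathcal{H}_m$ (using that $C^{(i),\perp}$ contains $\mathcal{H}_m^\perp$ as a uniquely characterized subcode arising from the minimum-weight codewords of $C^{(i),\perp}$), hence lies in $\Gamma\operatorname{L}_2(2^u)$, so its induced action on $\F_2^u/A_{u-i}$ factors through $D_i$ and a Frobenius power. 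An orbit-size bound $u\cdot|D_i|$ together with a structural bound on $|D_i|$ (reflecting that $A_{u-i}$ is $\F_2$-linear but, when $i\nmid u$, not $\F_{2^d}$-linear for any nontrivial subfield) should then force $2^i-1>u\cdot|D_i|$ whenever $2^i>u+1$, completing the dichotomy.
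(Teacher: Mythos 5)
First, note that the paper does not prove this statement: it is posed explicitly as a conjecture, with the authors remarking that it ``needs the exact computation of the automorphism group of any $C^{(i)}$ to be solved'' and verifying it only for $m=6$. So there is no proof in the paper to compare against, and your task is strictly beyond what the authors claim. That said, your set-up is sound and in one place sharper than the surrounding propositions: the normalized syndrome $\bar f=(S(\bv)+h_1h_2)+A_{u-i}$ is well defined on cosets (since $S$ is additive in characteristic $2$ and $S(C^{(i)})\subseteq A_{u-i}$), one can check that it really does transform as $\bar f\mapsto\det(\Phi)\,\bar f$ even when $h\neq 0$ (the cross terms $abh_1^2+a'b'h_2^2$ cancel against the transformation of $h_1h_2$), and the reduction of complete transitivity, for the subgroup you control, to transitivity of $\langle D_i,\pi\rangle$ on the $2^i-1$ nonzero classes of $\F_2^u/A_{u-i}$ is correct and consistent with the coset counts $\mu_1=2^m-1$, $\mu_3=2^i-1$, $\mu_2=(2^m-1)(2^i-1)$ in Theorem~\ref{CRIA}.

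However, neither direction of the biconditional is actually established. For sufficiency, the assertion that an orbit ``of exactly size $2^i-1$ is available'' because $2^i-1\leq u$ is only a counting heuristic: orbit lengths of the metacyclic group $\langle D_i,\pi\rangle$ must divide the group order, so beyond the inequality you need an explicit Frobenius-stable $A_{u-i}$ together with a verification that a \emph{single} orbit exhausts all $2^i-1$ nonzero classes; divisibility obstructions are not ruled out, and if they occur for the subgroup you exhibit you are thrown back onto the unknown full automorphism group. (Note also that $D_i\cup\{0\}$ is forced to be a subfield $\F_{2^e}$ with $e$ dividing both $i$ and $u-i$, which constrains $|D_i|$ more than your sketch acknowledges.) For necessity, everything hinges on $\Aut(C^{(i)})$ being contained in the semilinear group $\Gamma\mathrm{L}_2(2^u)$; you justify this by saying $\mathcal{H}_m^{\perp}$ is ``uniquely characterized'' inside $(C^{(i)})^{\perp}$ by minimum-weight codewords, but the minimum-weight codewords of $(C^{(i)})^{\perp}$ have weight $2^{m-1}-2^{u-1}$ and lie \emph{outside} $\mathcal{H}_m^{\perp}$, all of whose nonzero words have weight $2^{m-1}$, so this characterization is not established. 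Finally, there is an unresolved quantifier: $C^{(i)}$ depends on the choice of $A_{u-i}$, your sufficiency argument selects a special $A_{u-i}$ while your necessity argument must exclude every choice, and the proposal does not reconcile the two readings of the conjecture. As it stands this is a credible research plan, not a proof.
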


Note that for $m=6$ (so $u=3$), all codes $C^{(i)}$ in the chain are completely transitive. Thus, the conjecture is true for this case.

Finally, we can prove that all codes $C^{(i)}$ are completely regular.

\begin{lemma}\label{c3}
Let $C_3^{(i)}$ be the set of all codewords in $C^{(i)}$ of weight three.
Then $C_3^{(i)}$ is a $T(n,3,1,\lambda_i)$ design, where $\lambda_i=2^{m-i-1}-1$.
\end{lemma}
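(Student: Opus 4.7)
The plan is to fix a position $j \in J$ and count directly the number of weight-$3$ codewords of $C^{(i)}$ whose support contains $j$, verifying that the count equals $\lambda_i = 2^{m-i-1}-1$ independently of $j$.

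First I would establish a clean description of $C^{(i)}$. By construction $C^{(i)} = \langle C^{(u)}, \bv_1, \ldots, \bv_{u-i}\rangle$, where each $\bv_k$ satisfies $H_m\bv_k^T = 0$ and the syndromes $S(\bv_1), \ldots, S(\bv_{u-i})$ are $\F_2$-independent in $\F_{2^u}$. The map $\bv \mapsto S(\bv)$ is additive on $\F_2^n$ (the cross terms cancel in characteristic two), so Lemma~\ref{prop:descrip} combined with the span condition on the $S(\bv_k)$ should yield
$$
\bv \in C^{(i)} \;\Longleftrightarrow\; H_m \bv^T = 0 \text{ and } S(\bv) \in A,
$$
where $A \subseteq \F_{2^u}$ is a fixed $\F_2$-subspace of dimension $u-i$.

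Next, writing $a = \alpha^j$ and $b = \alpha^{j_1}$, a weight-$3$ codeword supported on $\{j, j_1, j_2\}$ forces $\alpha^{j_2} = a + b$ (so $b \notin \{0, a\}$) together with $S(\be_j + \be_{j_1} + \be_{j_2}) \in A$. The pivotal computation I would carry out is the short expansion
$$
S(\be_j + \be_{j_1} + \be_{j_2}) = a_1 a_2 + b_1 b_2 + (a_1+b_1)(a_2+b_2) = a_1 b_2 + a_2 b_1 = \Det_u(a, b),
$$
using $a = a_1 + a_2\alpha$ and $b = b_1 + b_2\alpha$. This reduces the enumeration to counting $b \in \F_{2^m}\setminus\{0, a\}$ with $\Det_u(a, b) \in A$, and then dividing by $2$ since $b$ and $a+b$ produce the same unordered triple $\{j, j_1, j_2\}$.

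Finally, I would observe that for fixed nonzero $a$ the map $b \mapsto \Det_u(a, b) = a_1 b_2 + a_2 b_1$ is an $\F_2$-linear surjection $\F_{2^m} \to \F_{2^u}$ (surjectivity follows by varying whichever coordinate of $b$ is multiplied by a nonzero coordinate of $a$). Hence the preimage of $A$ has size $|A| \cdot 2^u = 2^{u-i}\cdot 2^u = 2^{m-i}$, and both excluded values $b = 0$ and $b = a$ lie in this preimage (the determinant vanishes there). Subtracting and halving yields $(2^{m-i} - 2)/2 = 2^{m-i-1} - 1 = \lambda_i$, independently of $j$, so $C_3^{(i)}$ is a $T(n,3,1,\lambda_i)$ design. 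The only delicate step I foresee is the characteristic-two identity $S(\be_j+\be_{j_1}+\be_{j_2}) = \Det_u(a,b)$; everything after that is linear algebra over $\F_{2^u}$. A benefit of this route is that it avoids any appeal to the automorphism group of $C^{(i)}$, whose structure is only conjectural for intermediate $i$.
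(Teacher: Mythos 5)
Your proposal is correct and follows essentially the same route as the paper's proof: the same characterization of $C^{(i)}$ by $H_m\bv^T=0$ and $S(\bv)\in A_{u-i}$, the same pivotal identity $S(\bv)=\Det_u(a,b)$ for a weight-three word with support $\{a,b,a+b\}$, and the same count of admissible $b$ followed by division by two. The only cosmetic difference is that you obtain the count $2^{m-i}$ from the linearity and surjectivity of $b\mapsto\Det_u(a,b)$, whereas the paper enumerates the fibres of this map explicitly ($2^u-2$ further values with determinant zero plus $2^u$ values for each of the $2^{u-i}-1$ nonzero targets in $A_{u-i}$).
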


\begin{proof}
From the construction of codes $C^{(i)}$ we know that the codewords
$\bv$ of weight three are those such that $H_m\bv^T=0$ and $S(\bv)$
belongs to a fixed subspace $A_{u-i} \subset \F_2^u$ of dimension $u-i$
over $\F_2$. Hence, taking a fixed nonzero element
$\gamma=\gamma_1+\gamma_2\alpha \in \F_{2^m}$ every codeword of weight
three covering this element $\gamma$ is defined giving
$\gamma'=\gamma'_1+\gamma'_2\alpha \in \F_{2^m}$ such that
$\det_u(\gamma,\gamma')\in A_{u-i}$. Indeed, if $\bv\in \F_2^n$
is of weight three let
$$
\{\gamma=\gamma_1+\gamma_2\alpha,\; \gamma'=\gamma'_1+
\gamma'_2\alpha,\;\gamma''=\gamma''_1+\gamma''_2\alpha \}
$$
be its support. Then, since $H_m\bv^T=0$ we have $\gamma''=\gamma+\gamma'$ and so
$$
S(\bv)=\gamma_1\gamma_2+ \gamma'_1\gamma'_2+\gamma''_1\gamma''_2 =\Det_u(\gamma,\gamma').
$$

Now we want to count how many codewords of weight three cover a fixed
nonzero element $\gamma\in \F_{2^m}$. We begin by counting
how many $\gamma'\in \F_{2^m}$ gives $\Det_u(\gamma,\gamma') \in A_{u-i}$. Recall that $\Det_u(\gamma,\gamma')$ is an element of $\F_{2^u}$, considered as a binary vector.
For any nonzero element $\gamma=\gamma_1+\gamma_2\alpha \in \F_{2^m}$ there
are $2^u-2$ nonzero values $\gamma'=\beta^i\gamma\not=\gamma$, where
$i\in\{0,\ldots,2^u-2\}$ such that $\Det_u(\gamma, \gamma')=0$ and there
are $2^u$ values $\gamma'$ giving $\Det_u(\gamma, \gamma')=\beta^j\in \F_{2^u}$, for a fixed  $j\in\{0,\ldots,2^u-2\}$.
There are $2^{u-i}-1$ nonzero vectors in $A_{u-i}$. Hence, summing up,
we conclude that there are $2^u-2 + 2^u(2^{u-i}-1)=2^{m-i}-2$ values
$\gamma'$, such that $\det(\gamma, \gamma') \in A_{u-i}$.
However, the codeword with support $\{\gamma, \gamma', \gamma''=\gamma+\gamma'\}$
is counted twice, once as $\gamma'$ and again as $\gamma''$. Hence, finally,
the number $\lambda_i$ of codewords of weight three covering $\gamma$ is $2^{m-i-1}-1$.
\end{proof}

\begin{theo}\label{CRIA}
For $i\in \{0,\ldots,u\}$, the code $C^{(i)}$ is completely regular with
intersection array
$(2^m-1,2^{m}-2^{m-i},1;1,2^{m-i},2^m-1).$
\end{theo}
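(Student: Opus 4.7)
The plan is to establish Theorem \ref{CRIA} by first fixing the covering radius at $\rho = 3$ via a dual-code argument, then classifying cosets of $C^{(i)}$ by their syndromes, and finally computing each intersection number directly while checking it is independent of the representative chosen. The case $i = 0$ reduces to the classical Hamming code $\mathcal{H}_m$, whose intersection array $(n;1)$ matches the formal array in the statement with $b_1 = 0$ signalling truncation; for $i \geq 1$ the argument proceeds in three further stages.

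Covering radius and coset counts: since $C^{(u)} \subseteq C^{(i)}$ we have $(C^{(i)})^\perp \subseteq (C^{(u)})^\perp$, and by the completely regular property of $C^{(u)}$ together with Lemma~\ref{lem:2.1}(ii)-(iii) the code $(C^{(u)})^\perp$ has exactly three nonzero weights. Thus the external distance satisfies $s(C^{(i)}) \leq 3$ and $\rho \leq 3$ by Lemma~\ref{lem:2.1}(i). For the reverse bound, Lemma~\ref{c3} shows that $\mathcal{H}_m$ contains strictly more weight-$3$ codewords than $C^{(i)}$ for $i \geq 1$; any such codeword of $\mathcal{H}_m \setminus C^{(i)}$ has its whole coset contained in $\mathcal{H}_m$, so every nonzero coset element has weight at least $3$, producing a genuine weight-$3$ coset and forcing $\rho = 3$. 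The syndrome map $\bv \mapsto (H_m\bv^T,\; \pi_i(S(\bv)))$ into $\F_2^m \times (\F_{2^u}/A_{u-i})$ partitions the $2^{m+i}$ cosets into weight classes with $\mu_0 = 1$, $\mu_1 = n$, $\mu_3 = 2^i - 1$, and hence $\mu_2 = n(2^i - 1)$.

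Next I would read off the easy intersection numbers. Trivially $b_0 = n$ and $c_1 = 1$. For $b_1$, a neighbor $\be_j + \be_k$ of $\be_j$ lies in $C(2)$ iff no weight-$3$ codeword of $C^{(i)}$ contains $\{j,k\}$; Lemma~\ref{c3} gives $\lambda_i = 2^{m-i-1} - 1$ such codewords through $j$, and any two of them must intersect only at $j$ (else their sum would be a weight-$2$ codeword), so the forbidden $k$'s number exactly $2\lambda_i$ and $b_1 = (n-1) - 2\lambda_i = 2^m - 2^{m-i}$. For $c_3 = n$: a weight-$3$ representative $\bx \in \mathcal{H}_m$ has each neighbor $\bx + \be_j$ leaving $\mathcal{H}_m$ (since $H_m\be_j^T = \alpha^j \neq 0$), ruling out weight-$0$ and weight-$3$ cosets; it cannot lie in a weight-$1$ coset either, because $\bx + \be_j + \be_k \in C^{(i)}$ would force $\alpha^k = \alpha^j$ via the $H_m$-condition and then $\bx \in C^{(i)}$, contradicting the weight-$3$ status. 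Symmetrically, for a weight-$2$ representative $\bv$ with $H_m\bv^T = \bh$, exactly one index $j$ (the unique $c_h$ with $\alpha^{c_h} = \bh$) sends $\bv + \be_j$ back into $\mathcal{H}_m$, and that neighbor must sit in a weight-$3$ coset (else $\bv$ would already be in a weight-$1$ coset); hence $b_2 = 1$.

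The essential calculation is $c_2 = 2^{m-i}$. Writing a weight-$2$ representative as $\bv = \be_{j_1} + \be_{j_2}$ with $\alpha^{j_1} = a$, $\alpha^{j_2} = b$, $\bh = a+b$, the weight-$2$ condition reads $\det_u(a,b) \notin A_{u-i}$. Neighbors $\bv + \be_c$ in $C(1)$ split into $c \in \{j_1,j_2\}$ (two weight-$1$ vectors) and $c \notin \{j_1,j_2\}$ for which $\bv + \be_c$ sits in a weight-$1$ coset. The latter requires $\bv + \be_c + \be_l \in C^{(i)}$ for some $l$; the $H_m$-part forces $\alpha^l = \bh + \alpha^c$ (so $c \neq c_h$), and unpacking the $S$-formula via the identity $S(\be_{c_h}) = S(\bv) + \det_u(a,b)$ together with the bilinear relation $S(\be_{l(c)}) = S(\be_{c_h}) + S(\be_c) + \det_u(\bh,\alpha^c)$ reduces the $S$-condition to the single affine constraint $\det_u(\bh,\alpha^c) \equiv \det_u(a,b) \pmod{A_{u-i}}$. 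Since $\bh \neq 0$, the functional $\bx \mapsto \det_u(\bh, \bx)$ is a surjective $\F_{2^u}$-linear map $\F_{2^u}^2 \to \F_{2^u}$ with $2^u$ preimages per value, giving $2^u \cdot |A_{u-i}| = 2^{m-i}$ total solutions; removing the two automatic solutions $\alpha^c \in \{a, b\}$ (corresponding to $c \in \{j_1,j_2\}$, already counted) leaves $2^{m-i} - 2$ new contributions and yields $c_2 = 2 + (2^{m-i} - 2) = 2^{m-i}$. The main obstacle is this last step: isolating the $\det_u$ bilinear identity and observing that the count depends only on $\det_u(a,b) \bmod A_{u-i}$ being nonzero, which ensures $c_2$ is the same across all weight-$2$ cosets and completes the proof of complete regularity with the claimed intersection array.
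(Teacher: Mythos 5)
Your proposal is correct, and for the decisive step it takes a genuinely different route from the paper. The two arguments agree on the skeleton: both read off $b_0=c_3=n$, $b_2=c_1=1$, and both get $b_1=(n-1)-2\lambda_i=2^m-2^{m-i}$ from the design property of $C_3^{(i)}$ (Lemma~\ref{c3}), together with the coset counts $\mu_0=1$, $\mu_1=n$, $\mu_3=2^i-1$, $\mu_2=n(2^i-1)$. The divergence is at $c_2$: the paper only argues that the number of neighbors of a vector of $C^{(i)}(2)$ lying in $C^{(i)}(1)$ is constant (``a similar argument shows\ldots''), and then extracts the value from the mass identity $b_1\mu_1=c_2\mu_2$ of Lemma~\ref{lem:2.2}, giving $c_2=b_1/(2^i-1)=2^{m-i}$. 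You instead compute $c_2$ head-on: using the bilinear identity $S(\be_{j}+\be_{k})$-type expansion $Q(x+y)=Q(x)+Q(y)+\Det_u(x,y)$ you reduce membership of $\bv+\be_c$ in a weight-one coset to the single condition $\Det_u(\bh,\alpha^c)\in \Det_u(a,b)+A_{u-i}$, and count $2^u\cdot 2^{u-i}=2^{m-i}$ solutions (of which two are the trivial ones $\alpha^c\in\{a,b\}$, and neither $0$ nor $\bh$ qualifies, so no spurious exclusions are needed). This buys you two things the paper leaves implicit: an explicit proof that the count is the same for every weight-two coset (exactly the constancy the paper asserts without detail), and independence from Lemma~\ref{lem:2.2}. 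You also supply the verification that $\rho=3$ (upper bound via external distance through $(C^{(i)})^\perp\subseteq (C^{(u)})^\perp$, lower bound via a weight-three coset inside $\mathcal{H}_m$), which the paper takes for granted. The cost is length; the paper's route is shorter but rests on the unproved constancy claim. Both yield the stated array, and your determinant count is consistent with the paper's own Lemma~\ref{c3} computation, of which it is essentially the weight-four analogue.
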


\begin{proof}
For each $i\in \{0,\ldots,u\}$, we have that $C^{(i)}$ is completely regular
if the parameters $(b_0, b_1, b_2; c_1,c_2,c_3)$ of the intersection array
are computable. Since the minimum distance in $C^{(i)}$ and in
$C^{(i)}(\rho)=C^{(i)}(3)$ is 3, it is obvious that $b_0=c_3=n=2^m-1$ and
$b_2=c_1=1$. Let $\bx\in C(1)$, we count the number of neighbors of $\bx$ in
$C^{(i)}(1)$. Without loss of generality, we assume that $\bx$ has
weight one. Therefore, $\bx$ has $n-1$ neighbors of weight two. By Lemma \ref{c3}, twice
$\lambda_i$ of these neighbors are covered by minimum weight
codewords of $C^{(i)}$. As the result does not depend on the choice of
$x$, we conclude that $a_1=2\lambda_i$. Therefore, $b_1=n-c_1-a_1=n-1-\lambda_i=2^{m}-2^{m-i}$.
A similar argument shows that any vector in $C^{(i)}(2)$ has a fixed
number of neighbors in $C^{(i)}(2)$. Therefore, $c_2$ is also
calculable. Applying Lemma \ref{lem:2.2}, we have that $\mu_1 b_1 =
\mu_2 c_2$. Since:
$$
\mu_0=1;\;\mu_1=n;\;\mu_3=2^i-1\;\mbox{ and
}\;\mu_0+\mu_1+\mu_2+\mu_3=2^i(n+1);
$$
we deduce $\mu_2=(2^i-1)n$ and $c_2= \mu_1 b_1 / \mu_2=b_1/(2^i-1)=2^{m-i}$.
\end{proof}

\begin{coro}\label{extended}
For $i\in \{0,\ldots,u\}$, the extended code  $C^{(i)*}$ is completely regular
with intersection array $(2^m,2^m-1,2^{m}-2^{m-i},1;1,2^{m-i},2^m-1,2^m).$
\end{coro}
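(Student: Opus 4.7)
The plan is to apply Lemma \ref{lem:2.8} to each $C^{(i)}$ for $i \in \{1,\ldots,u\}$ and to treat $i = 0$ separately (as the standard extended Hamming code). Theorem \ref{CRIA} supplies every hypothesis of Lemma \ref{lem:2.8} for $C^{(i)}$ except the dual-weight condition $w_1 + w_3 = 2w_2 = n+1 = 2^m$ on the three nonzero weights $w_1 < w_2 < w_3$ of $(C^{(i)})^\perp$. Once this single condition is in place, Lemma \ref{lem:2.8} immediately yields the claimed extended intersection array $(n+1,n,b_1,1;1,c_2,n,n+1)$, which on substituting $b_1 = 2^m - 2^{m-i}$ and $c_2 = 2^{m-i}$ from Theorem \ref{CRIA} is exactly the array in the statement.

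To verify the weight condition, I would first combine Lemma \ref{lem:2.1}(ii)--(iii) with the complete regularity and covering radius $\rho = 3$ of $C^{(i)}$ provided by Theorem \ref{CRIA} to conclude that the external distance of $C^{(i)}$ equals $s = 3$, so $(C^{(i)})^\perp$ has exactly three distinct nonzero weights. Next, the nested chain $C^{(u)} \subseteq C^{(i)} \subseteq \mathcal{H}_m$ dualises to $\mathcal{H}_m^\perp \subseteq (C^{(i)})^\perp \subseteq (C^{(u)})^\perp$, so the three weights of $(C^{(i)})^\perp$ are drawn from the nonzero weights of $(C^{(u)})^\perp$ and must include the simplex weight $2^{m-1}$ inherited from $\mathcal{H}_m^\perp$. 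For $m = 2u$ and exponent $r = 2^u + 1$, the ambient dual $(C^{(u)})^\perp$ is the classical Kasami--Gold three-weight code with weights $\{2^{m-1} - 2^{u-1},\, 2^{m-1},\, 2^{m-1} + 2^{u-1}\}$ (a consequence of the evaluation $\sum_{x \in \F_{2^m}} (-1)^{\mbox{\rm Tr}_m(ax) + \mbox{\rm Tr}_u(bx^{r})} \in \{0, \pm 2^u\}$ of the associated character sum). Since $(C^{(i)})^\perp$ has exactly three nonzero weights, all drawn from this three-element set, it must realise all three of them, so $w_1 + w_3 = 2^m$ and $2w_2 = 2^m$ as needed. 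Applying Lemma \ref{lem:2.8} then finishes the argument for $i \geq 1$, and the case $i = 0$ is the familiar extended Hamming code, consistent with the degenerate appearance of the stated array (with $b_2 = 0$).

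The main obstacle is the invocation of the Kasami--Gold weight distribution of $(C^{(u)})^\perp$: it is a classical result implicit in the complete regularity of $C^{(u)}$ proved by Calderbank--Goethals in \cite{cal,kas}, but if a self-contained derivation is desired, the weight distribution can be recovered by MacWilliams-transforming the weight distribution of $C^{(u)}$, which is itself determined from its intersection array (Theorem \ref{CRIA}) and the coset multiplicities of Lemma \ref{lem:2.2}.
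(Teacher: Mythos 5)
Your proposal is correct and follows essentially the same route as the paper: deduce external distance $3$ from complete regularity via Lemma \ref{lem:2.1}, locate the three dual weights inside the Calderbank--Goethals spectrum $\{2^{m-1},\,2^{m-1}\pm 2^{u-1}\}$ of $(C^{(u)})^\perp$ using the dual inclusion $(C^{(i)})^\perp\subseteq (C^{(u)})^\perp$, and then invoke Lemma \ref{lem:2.8}. If anything, you are slightly more careful than the paper in justifying why all three weights must actually occur (the paper asserts the equality of weight spectra directly from the containment), and your separate handling of the degenerate case $i=0$ is a reasonable refinement rather than a departure.
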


\begin{proof}
By Theorem \ref{CRIA} any code $C^{(i)}$ is completely regular. In particular,
this means (Lemma \ref{lem:2.1}), that for any such code $C^{(i)}$ the
external distance $s(C^{(i)})$ equals the covering radius $\rho(C^{(i)})$,
i.e. $s(C^{(i)}) = \rho(C^{(i)})$. Since $\rho(C^{(i)}) = 3$, we conclude that
$s(C^{(i)}) = 3$ for any $i\in \{0,\ldots,u\}$. As it was shown in \cite{cal},
the dual code of $C^{(u)}$ has the following values in the weight spectrum:
$$
2^{m-1}, \;\;2^{m-1} \pm 2^{u-1}.
$$
But any code $C^{(i)}$ contains the code $C^{(u)}$ as
a subcode, implying that the dual $(C^{(i)})^\perp$ is contained in
$(C^{(u)})^\perp$. This, in turn, implies that any such code $(C^{(i)})^\perp$
has the same weight spectrum as the code $(C^{(u)})^\perp$.
Now the result follows from Lemma \ref{lem:2.8}.
\end{proof}

The next theorem shows that the extended codes $C^{(i)*}$ are not only completely regular, but completely transitive.
\begin{theo}\label{dt}
For $i\in \{0,\ldots,u\}$, the automorphism group of the extended code  $C^{(i)*}$ is $\Aut(C^{(i)*})=\Aut(C^{(i)})\ltimes \F_2^m$. Code $C^{(i)*}$ is completely transitive when $C^{(i)}$ is completely transitive.
\end{theo}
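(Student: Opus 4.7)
The strategy is to first identify the natural subgroups of $\Aut(C^{(i)*})$, then verify that together they exhaust the automorphism group, and finally transfer transitivity from $C^{(i)}$ to $C^{(i)*}$. Label the positions of $C^{(i)*}$ by elements of $\F_{2^m} \cong \F_{2^u}[\alpha]$, where the extended (parity) position corresponds to $0$ and the other $n$ positions to the nonzero field elements $\alpha^0, \ldots, \alpha^{n-1}$. Using Lemma~\ref{prop:descrip} extended with the overall parity bit and the projection to $A_{u-i}$, a vector $\bv \in \F_2^{n+1}$ lies in $C^{(i)*}$ exactly when its support $I \subseteq \F_{2^m}$ has even size, satisfies $\sum_{x\in I} x = 0$, and satisfies $\sum_{x\in I}\gamma_{x1}\gamma_{x2} \in A_{u-i}$.

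The first step is to show that, for each $v \in \F_{2^m}$, the translation $\sigma_v : x \mapsto x+v$ lies in $\Aut(C^{(i)*})$. This is a direct computation: $I$ is replaced by $I+v$, and expanding $(\gamma_{x1}+v_1)(\gamma_{x2}+v_2)$ and $\sum(x+v)$, the cross terms $|I|\,v$, $v_2 \sum \gamma_{x1}$, $v_1 \sum \gamma_{x2}$ and $|I|v_1 v_2$ all vanish because $|I|$ is even and $\sum\gamma_{x1}=\sum\gamma_{x2}=0$. This identifies a subgroup $T \cong \F_2^m$ of translations. On the other hand, any $\pi \in \Aut(C^{(i)})$ extends to $\Aut(C^{(i)*})$ fixing the parity position, because $\pi$ preserves weights and hence parities, and one checks conversely that any $\pi^* \in \Aut(C^{(i)*})$ fixing position $0$ restricts to an automorphism of $C^{(i)}$: such a $\pi^*$ permutes codewords with zero parity bit (even-weight words of $C^{(i)}$) and codewords with parity bit $1$ (odd-weight words of $C^{(i)}$), so it sends $C^{(i)}$ into itself.

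Since $T$ acts regularly on the position set $\F_{2^m}$, every $\pi^* \in \Aut(C^{(i)*})$ can be written as $\sigma_{\pi^*(0)} \cdot \tau$ with $\tau$ fixing $0$, and $T \cap \Aut(C^{(i)}) = \{e\}$, giving a factorization $\Aut(C^{(i)*}) = \Aut(C^{(i)}) \cdot T$ with trivial intersection. Normality of $T$ follows from the fact that $\Aut(C^{(i)})$ acts $\F_2$-linearly on $\F_{2^m}$: since $C^{(i)} \subseteq \CH_m$ and the minimum-weight codewords of $C^{(i)}$ single out a system of linear triples $\{x,y,x+y\}$, any $\tau \in \Aut(C^{(i)})$ preserves enough of the additive structure of $\F_{2^m}$ to be additive, whence $\tau \sigma_v \tau^{-1} = \sigma_{\tau(v)} \in T$. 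This yields the semidirect product $\Aut(C^{(i)*}) = \Aut(C^{(i)}) \ltimes \F_2^m$. I expect the verification of this linearity to be the main subtle point, although alternatively one can argue by comparing orders using the established regular action of $T$ on positions and the fact that the point stabilizer is exactly $\Aut(C^{(i)})$.

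For the transitivity statement, assume $C^{(i)}$ is completely transitive, so $\Aut(C^{(i)})$ has exactly $4$ orbits on the cosets of $C^{(i)}$, one per weight in $\{0,1,2,3\}$. The covering radius of $C^{(i)*}$ is $4$ (Corollary~\ref{extended}), so one must exhibit exactly $5$ orbits on cosets of $C^{(i)*}$. Translations permute cosets via $\sigma_v(C^{(i)*}+\bu) = C^{(i)*}+\sigma_v(\bu)$ and in particular can move the weight-$1$ vectors $\be_x$ ($x \in \F_{2^m}$) transitively among themselves, which immediately handles the weight-$1$ orbit. For cosets of weights $2, 3, 4$, the idea is to combine a translation that aligns the extended coordinate with an element of $\Aut(C^{(i)})$ that completes the transport within the original $n$ coordinates, using the hypothesis that $\Aut(C^{(i)})$ is transitive on cosets of $C^{(i)}$ of each weight. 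Thus each weight class of cosets of $C^{(i)*}$ forms a single orbit, establishing complete transitivity of $C^{(i)*}$.
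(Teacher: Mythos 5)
Your argument follows the same route as the paper's proof: you exhibit the translation group $T\cong\F_2^m$ inside $\Aut(C^{(i)*})$ by the identical vanishing-cross-terms computation (using even weight and $H_m\bv^T=0$), identify the stabilizer of the parity position with $\Aut(C^{(i)})$, conclude the semidirect product, and then obtain complete transitivity by composing a translation (to move the parity coordinate into or out of the coset's support) with an automorphism of $C^{(i)}$, exactly as the paper does weight class by weight class. The one place you diverge is in trying to justify normality of $T$ from additivity of the point stabilizer deduced from the weight-$3$ triples $\{x,y,x+y\}$ of $C^{(i)}$ --- note that for $i$ near $u$ these triples all lie inside single $\F_{2^u}$-lines of $\F_{2^m}$ and so do not by themselves force a stabilizing permutation to be additive on all of $\F_{2^m}$, and your fallback order count gives only the factorization, not normality --- but the paper itself simply asserts the conjugation identity $\phi\pi_{\bw}\phi^{-1}=\pi_{\phi(\bw)}$ at this same step, so your proposal matches the paper's proof in both structure and in the level of detail supplied at its weakest point.
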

\begin{proof}
Let $\F_2^u=A_{u-i}\oplus A_i$ be the decomposition of the binary linear space
$\F_2^u$ as a direct sum of subspaces of dimension $u-i$ and $i$, respectively. Let $S_i(\bv)$ be the projection of
$S(\bv)$ over $A_i$.
By definition, the elements of $\bv\in C^{(i)}$ can be characterized
by the syndrome $H^{}_m\bv^T=0$ and $S_i(\bv)=0$.

Code $C^{(i)*}$ is the extension of $C^{(i)}$ by an overall parity check coordinate, which we assume is the $0$th coordinate.
Codewords in $C^{(i)*}$ have $n=2^m$ components and we can associate, at random and once for all, the coordinate $i$th with a vector $\bw_i\in \F_2^m$.
Any vector $\bw\in\F_2^m$ define a permutation $\pi_\bw\colon \{1,\ldots,n\}\longrightarrow \{1,\ldots,n\}$ such that $\pi_\bw(i)=j$, where $\bw_j=\bw+\bw_i$. Let $T=\{\pi_\bw : \bw\in \F_2^m\}$ the set of all these permutations and note that $T$ has a group structure isomorphic to the additive structure $\F_2^m$. For each $\bw\in \F_2^m$, set $\bw=\gamma_1+\gamma_2\alpha \in \F_{2^u}[\alpha]$.

As all codewords in $C^{(i)*}$ have even weight it is clear that $T$ is a subgroup of $\Aut(C^{(i)*})$. Indeed, let $\ba=(a_0,\ldots,a_n)\in C^{(i)*}$, this means that $\ba$ has an even number of nonzero components ($\sum_{i=0}^n a_i=0$); $H_m\ba^T=\sum_{\bw_i\in \F_2^m} a_i \bw_i =\bo$ and $S(\ba)\in A_{u-i}$. Now take $\pi_\bw(\ba)=\ba'=(a_0',a_1',\ldots,a_n')$, where $a'_j=a_i$, such that $\bw_j=\bw+\bw_i$  and compute:
\begin{equation*}
\begin{split}
\sum_{j=0}^n a'_j=&\sum_{i=0}^n a_i=0;\\
H_m\ba'^T=&\sum_{\bw_j\in \F_2^m} a'_j \bw_j =\sum_{\bw_i\in \F_2^m} a_i (\bw+\bw_i)= \bw\sum_{i=0}^n a_i=\bo\\
S(\ba')= &\sum_j a'_j(\gamma_{j1}\gamma_{j2}) =\sum_i a_i (\gamma_1+\gamma_{i1})(\gamma_2+\gamma_{i2})= \\ &\sum_i a_i \gamma_{i1}\gamma_{i2} + \gamma_1 \sum_i a_i \gamma_{i1} +\gamma_2 \sum_i a_i \gamma_{i2} +\gamma_1\gamma_2\sum_i a_i = S(\ba) \in A_{u-i}.
\end{split}
\end{equation*}
Hence, $\pi_\bw(\ba)\in C^{(i)*}$.

 Furthermore, $T$ is a normal subgroup in $\Aut(C^{(i)*})$. Indeed, for any $\phi\in \Aut(C^{(i)*})$ we have that $\phi \pi_\bw\phi^{-1}$ is again a translation $\pi_\bz$, where $\bz=\phi(\bw)$.
 For any $\phi\in\Aut(C^{(i)*})$, it is clear that we can find $\phi'\in\Aut(C^{(i)})$ fixing the extended coordinate and a vector $\by\in \F_2^m$, such that $\phi=\phi'\pi_{\by}$.
 Therefore, we have $\Aut(C^{(i)*})/T \cong \Aut(C)$ and so $\Aut(C^{(i)*})$ is the semidirect product of $\F_2^m$ and $\Aut(C^{(i)})$ (obviously, we can identify $T$ with $\F_2^m$).
 The first statement is proven.

Let us assume that $C^{(i)}$ is completely transitive. To prove that $C^{(i)*}$ is completely
transitive we show that all cosets of $C^{(i)*}$ in $\F_2^{2^m}$ with the same minimum weight are in the same orbit by the action of $\Aut(C^{(i)*})$.

The number of cosets of $C^{(i)*}$ is twice the cosets of $C^{(i)}$. If $C^{(i)}+\bv$ is a coset of $C^{(i)}$, where $\bv$ is a representative vector of minimum weight then $C^{(i)*}+(0|\bv)$ and $C^{(i)*}+(1|\bv)$ are cosets of $C^{(i)*}$. The cosets of $C^{(i)*}$ of weight $4$ are of the form $C^{(i)*}+(1|\bv)$, where $C^{(i)}+\bv$ is a coset of weight $3$ of $C^{(i)}$. Since the cosets of weight $3$ of $C^{(i)}$ are in the same $\Aut(C^{(i)})$-orbit and $\Aut(C^{(i)})\subset \Aut(C^{(i)*})$, it follows that all the cosets of weight $4$ of $C^{(i)*}$ are in the same $\Aut(C^{(i)*})$-orbit.

Now consider the cosets of $C^{(i)*}$ of weight $r\in\{1,2,3\}$.  They are of the form $C^{(i)*}+(0|\bv)$, where $C^{(i)}+\bv$ is a coset of weight $r$ of $C^{(i)}$ and of the form $C^{(i)*}+(1|\bv)$, where $C^{(i)}+\bv$ is a coset of weight $r-1$ of $C^{(i)}$.
Cosets of the same minimum weight in $C^{(i)}$ can be moved among them by $\Aut(C^{(i)})$ and so, we need only to show that there exists an automorphism in $\Aut(C^{(i)*})$ moving $C^{(i)*}+(0|\bv)$ to $C^{(i)*}+(1|\bv')$, where $\bv,\bv'$ are at distance $r$ and $r-1$ from $C^{(i)}$, respectively.
Without loss of generality, we further assume that $\Supp(\bv')\subset \Supp(\bv)$ and so, $\Supp(\bv)=\Supp(\bv')\cup \{j\}$, for some index $j\in \{1,\ldots,2^m\}$.
The automorphism $\pi_{\bw_j}$  moves $C^{(i)*}+(0|\bv)$ to $C^{(i)*}+(1|\bv'')$, where $\Supp(\bv'')=\{k\,:\, \bw_k=\bw_j+\bw_s; s\in \Supp(\bv')\}$ and, finally, by using an automorphism from $\Aut(C^{(i)})$ we can move from $C^{(i)*}+(1|\bv'')$ to $C^{(i)*}+(1|\bv')$.
\end{proof}

\section{Nested antipodal distance-regular graphs and distance-transitive graphs of diameter $3$ and $4$}

Denote by $\Gamma^{(i)}$ (respectively, $\Gamma^{(i)*}$) the coset
graph, obtained from the code $C^{(i)}$ (respectively $C^{(i)*}$)
by Lemma \ref{lem:2.5}.

Since all cosets of weight $3$ (respectively, of weight $4$) of
the Hamming code $\mathcal{H}_m$ (respectively, of the extended
Hamming code $\mathcal{H}_m^*$) belong to this code, we conclude
that all graphs $\Gamma^{(i)}$ (respectively, $\Gamma^{(i)*}$) are
antipodal. This means that for $i>0$ all graphs $\Gamma^{(i)}$ and
$\Gamma^{(i)*}$ are imprimitive.

We need the following statement from \cite{god}.
\begin{lemma}\label{lem:cover}
Let $\Gamma$ be an antipodal distance-regular graph of diameter three.
Then $\Gamma$ is a $r$-fold covering graph of $K_n$, for some $r$ and $n$ and recall that $c_2$ is the number of common neighbors of two vertices in $\Gamma$ at distance two. Then the intersection array of $\Gamma$ is $(n-1,(r-1)c_2,1;1,c_2,n-1)$.
\end{lemma}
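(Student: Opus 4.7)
The plan is to first establish the structural picture---that $\Gamma$ is a cover of a complete graph---and then read off the six intersection numbers one by one. Since $\Gamma$ is antipodal of diameter $3$, the maximal cliques of $\Gamma_3$ (the fibres) partition the vertex set, and distance-regularity forces every fibre to have a common size $r = |\Gamma_3(\gamma)| + 1$. Let $n$ denote the number of fibres, so $|\Gamma| = rn$. Two neighbors of a common vertex must lie in distinct fibres (otherwise they would be simultaneously at distance $2$ and at distance $3$), so every vertex $\gamma$ has the same valency as its image $\bar\gamma$ in $\bar\Gamma$, and $\Gamma$ is a genuine antipodal $r$-cover of $\bar\Gamma$. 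I would then invoke the standard fact (see \cite{bro2}) that folding an antipodal distance-regular graph of diameter $d$ yields a distance-regular graph of diameter $\lfloor d/2\rfloor$; for $d = 3$ this forces $\bar\Gamma$ to have diameter $1$, hence $\bar\Gamma = K_n$, and valency-preservation gives $k = n - 1$.

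Next I would read off the easy entries: $c_1 = 1$ and $b_0 = k = n - 1$ by definition. For $c_3$, pick $\gamma,\delta$ in a common fibre at distance $3$. Every neighbor of $\delta$ lies outside the fibre of $\gamma$ (else it would be at distance $3$ from $\delta$) and not at distance $1$ from $\gamma$ (that would force $d(\gamma,\delta)\le 2$), so all $n - 1$ neighbors of $\delta$ lie in $\Gamma_2(\gamma)$, giving $c_3 = n - 1$. For $b_2$, I would double-count edges between $\Gamma_2(\gamma)$ and $\Gamma_3(\gamma)$: from the $\Gamma_3(\gamma)$ side there are $(r - 1)(n - 1)$ such edges by the previous step, while $|\Gamma_2(\gamma)| = rn - 1 - (n-1) - (r-1) = (r-1)(n-1)$, so $b_2 = 1$. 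Finally, the standard identity $k_2 = k_1 b_1 / c_2$ (obtained by counting edges between $\Gamma_1(\gamma)$ and $\Gamma_2(\gamma)$ in two ways), with $k_1 = n - 1$ and $k_2 = (r-1)(n-1)$, yields $b_1 = (r-1) c_2$.

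The main obstacle is the assertion $\bar\Gamma = K_n$, i.e., that every pair of distinct fibres is joined by an edge; if one wanted to prove this from scratch rather than cite it, one would need to rule out the possibility that some fibre fails to be hit by the neighbors of $\gamma$. Once this structural fact is in hand, the remaining intersection numbers fall out immediately from elementary counting in the fibre structure.
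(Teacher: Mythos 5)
Your argument is correct, and it is worth noting that the paper itself offers no proof of this lemma at all: it is simply quoted from Godsil and Hensel \cite{god} (a version of the underlying quotient theorem, due to Gardiner, appears in \cite{bro2}, Sec.~4.2). So your proposal is strictly more self-contained than what the paper does. Your counting is sound: the fibres are the sets $\{\gamma\}\cup\Gamma_3(\gamma)$ of constant size $r=k_3+1$; the observation that two neighbors of a common vertex cannot share a fibre gives $k\le n-1$; the argument that all $n-1$ neighbors of an antipode of $\gamma$ lie in $\Gamma_2(\gamma)$ gives $c_3=n-1$; the identity $|\Gamma_2(\gamma)|=rn-1-(n-1)-(r-1)=(r-1)(n-1)$ together with the two edge double-counts yields $b_2=1$ and $b_1=(r-1)c_2$. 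The one place where you lean on an external fact is exactly the one you flag: the step from ``the $k$ neighbors of $\gamma$ lie in $k$ distinct fibres'' to ``$\gamma$ has a neighbor in \emph{every} other fibre'' (equivalently $k=n-1$) does not follow from the fibre-disjointness observation alone and genuinely requires the quotient theorem (the folded graph is distance-regular of diameter $\lfloor D/2\rfloor$ with valency $b_0$, hence is $K_{b_0+1}$ here). Since that theorem is standard and you cite it explicitly, the proof is complete as written; if one wanted to avoid the citation entirely, that single structural claim is the only thing left to prove from scratch.
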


As a direct result of Lemma~\ref{lem:cover} and
Theorem \ref{CRIA} we obtain the following
new distance-regular and distance-transitive coset graphs.

\begin{theo}\label{grafs1}
For any even $m=2u$, ~$m\geq 4$
there exist a family of embedded antipodal distance-regular
coset graphs $\Gamma^{(i)}$ with $2^{2u+i}$ vertices and diameter $3$,
for $i=1, \ldots, u$. Graph $\Gamma^{(0)}$ has diameter $1$, i.e., it is a complete
graph $K_n$,\,$n=2^m-1$. Specifically:
\begin{itemize}
\item $\Gamma^{(i)}$, \,$i=1, \ldots, u$ has intersection array
$$
(2^m-1, 2^{m}-2^{m-i}, 1;1, 2^{m-i}, 2^m-1).
$$
\item $\Gamma^{(i)}$ is a subgraph of $\Gamma^{(i+1)}$ for all
$i=0,1, \ldots, u-1$.
\item $\Gamma^{(i)}$ covers $\Gamma^{(j)}$, where $j\in\{0,1, \ldots, i-1\}$ with
parameters $(2^m-1,2^{i-j},2^{2u-i+j})$,
\item The graphs $\Gamma^{(i)}$  are distance-transitive
for $i\in \{0,1,u\}$ when $m\geq 8$ and for $i\in \{0,1,2,3\}$ when $m=6$.
\end{itemize}
\end{theo}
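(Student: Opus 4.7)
My plan is to treat the four claims in turn, exploiting the chain of inclusions $C^{(u)}\subset C^{(u-1)}\subset\cdots\subset C^{(0)}=\mathcal{H}_m$ and the coset-graph machinery developed in Section~2.

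For the intersection array and vertex count, I would invoke Lemma~\ref{lem:2.5} applied to Theorem~\ref{CRIA}: each $C^{(i)}$ is completely regular with covering radius $\rho=3$ (for $i\geq 1$) and the stated array, hence its coset graph $\Gamma^{(i)}$ is distance-regular of diameter $3$ with the same parameters. The vertex count $|\F_2^n|/|C^{(i)}|=2^{n-\dim C^{(i)}}=2^{2u+i}$ follows from $\dim C^{(i)}=n-m-i$, computed in Section~3. For $i=0$ the Hamming code is perfect, so $\rho=1$ and $\Gamma^{(0)}=K_{2^m-1}$.

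For the embedded-subgraph and cover claims, I would use the natural $\F_2$-linear surjection $\pi_{i,j}\colon\F_2^n/C^{(i)}\to\F_2^n/C^{(j)}$ induced by $C^{(i)}\subset C^{(j)}$ for $j<i$, whose kernel $C^{(j)}/C^{(i)}$ has order $2^{i-j}$. Because a length-$1$ witness of an edge in $\Gamma^{(i)}$ remains a witness after projection, $\pi_{i,j}$ is a graph homomorphism; together with the diameter-$3$ structure and the computed intersection arrays, it is a $2^{i-j}$-fold antipodal covering, and the parameter $c_2=2^{m-i+j}$ is read off from bullet one applied to $\Gamma^{(j)}$. For the consecutive subgraph statement $\Gamma^{(i)}\subset\Gamma^{(i+1)}$, I would split $\F_2^n/C^{(i+1)}\cong(\F_2^n/C^{(i)})\oplus\F_2$ by a choice of $\bv\in C^{(i)}\setminus C^{(i+1)}$ and identify $\Gamma^{(i)}$ with the induced subgraph on the image of the resulting linear section inside $\Gamma^{(i+1)}$.

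For the distance-transitivity bullet, I would combine Proposition~\ref{aut} and the two subsequent propositions proving complete transitivity of $C^{(u)}$ and $C^{(1)}$ with the classical complete transitivity of $C^{(0)}=\mathcal{H}_m$; by Lemma~\ref{lem:2.5} this upgrades to distance-transitivity of $\Gamma^{(i)}$ for $i\in\{0,1,u\}$ whenever $m\geq 8$. The additional indices $i\in\{2,3\}$ for $m=6$ are handled by the direct verification mentioned immediately after the Conjecture. The main obstacle I expect is bullet two: verifying that the linear section actually yields an induced subgraph isomorphic to $\Gamma^{(i)}$ requires that length-$1$ witnesses of adjacency project consistently into the chosen subgroup rather than into the nontrivial coset of $C^{(i)}/C^{(i+1)}$. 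I would attack this using the explicit syndrome description from Lemma~\ref{prop:descrip}, choosing the complementary generator $\bv$ so as to respect the $S(\cdot)$-projection built in Section~3, and then checking that neighbours in $\Gamma^{(i)}$ admit section-lifted witnesses in $\Gamma^{(i+1)}$.
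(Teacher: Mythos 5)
Your plan for the first, third and fourth bullets coincides with what the paper itself does (the paper offers no separate proof: it derives the theorem directly from Theorem~\ref{CRIA} and Lemma~\ref{lem:2.5} for the distance-regularity, diameter and intersection array, from the nesting of the codes together with Lemma~\ref{lem:cover} for the antipodal covering structure, and from the complete-transitivity propositions for distance-transitivity when $i\in\{0,1,u\}$, plus the remark after the Conjecture for $m=6$). Your explicit description of the quotient map $\pi_{i,j}$ on syndrome spaces as a $2^{i-j}$-to-one local isomorphism is a welcome supplement that the paper leaves implicit; one small caution is that the third entry $2^{2u-i+j}=2^{m-(i-j)}$ of the cover parameters is not ``$c_2$ read off from bullet one applied to $\Gamma^{(j)}$'', since $c_2(\Gamma^{(j)})=2^{m-j}$.

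The genuine gap is bullet two, and the obstacle you flag there is fatal to your construction rather than a technicality to be finessed. View $\Gamma^{(i)}$ as the Cayley graph on the syndrome space $\F_2^n/C^{(i)}\cong\F_2^{m+i}$ whose connection set is the set of columns of a parity-check matrix $H^{(i)}$ of $C^{(i)}$. Any linear (or affine) section of the projection $\F_2^{m+i+1}\to\F_2^{m+i}$ has image a coset of an index-two subgroup $W$, and the induced subgraph on that image retains only those edges whose weight-one witnesses $\be_k$ have $C^{(i+1)}$-syndrome lying in $W$; since the columns of $H^{(i+1)}$ have full rank and hence span all of $\F_2^{m+i+1}$, they cannot all lie in a hyperplane, so the induced subgraph has valency strictly smaller than $2^m-1$ and is not a copy of $\Gamma^{(i)}$ --- no choice of the complementary generator $\bv$ or of the $S(\cdot)$-projection can fix this. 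Indeed no literal embedding can exist: a connected $k$-regular graph contains no proper $k$-regular subgraph (a vertex of such a subgraph already uses all $k$ of its neighbours, so the subgraph's vertex set is closed under adjacency and, by connectivity, is everything), and both $\Gamma^{(i)}$ and $\Gamma^{(i+1)}$ are connected of valency $2^m-1$. The relation the paper's ``embedded''/``subgraph'' language must be read as asserting is exactly the one your third bullet already provides with $j=i$ and $i$ replaced by $i+1$: the inclusion $C^{(i+1)}\subset C^{(i)}$ makes $\Gamma^{(i+1)}$ a $2$-fold cover of $\Gamma^{(i)}$, i.e.\ $\Gamma^{(i)}$ is a quotient of $\Gamma^{(i+1)}$. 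You should state bullet two in that form and discard the section construction.
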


As for the codes that give rise to this graphs, we conjecture that the graphs $\Gamma^{(i)}$ are
distance-transitive for $i\in \{2,\ldots,u-1\}$ and $2^i \leq
u+1$.

Finally, from  Lemma \ref{lem:2.5}, Theorem~\ref{dt} and Corollary~\ref{extended}, we can establish the following results for the coset graphs coming from the extended codes $C^{(i)*}$.

\begin{theo}\label{grafs2}
For any even $m=2u$, ~$m\geq 4$, $n=2^m-1$ and any $i=0,1, \ldots, u$
there exist a family of embedded antipodal distance-regular
coset graphs $\Gamma^{(i)*}$ with $2^{m+i+1}$
vertices and diameter $4$.  Specifically:
\begin{itemize}
\item $\Gamma^{(i)*}$ has intersection array
$$
(2^m+1,2^m, 2^{m}-2^{m-i}, 1;1, 2^{m-i},2^m, 2^m+1).
$$
\item $\Gamma^{(i)*}$ is a subgraph of $\Gamma^{(i+1)*}$ for all
$i=0,1, \ldots, u-1$.
\item $\Gamma^{(i)*}$ covers $\Gamma^{(j)*}$, where $j=0,1, \ldots, i-1$ with
the size of the fibre $r_{i,j}=2^{i-j}$.
\item The graphs $\Gamma^{(i)*}$  are distance-transitive
for $i=0,1,u$ when $m\geq 8$ and $i=0,1,2,3$ when $m=6$.
\end{itemize}
\end{theo}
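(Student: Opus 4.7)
The plan is to derive each bullet of the theorem by combining Corollary~\ref{extended}, Theorem~\ref{dt}, and Lemma~\ref{lem:2.5} with the antipodality observation already made just before Lemma~\ref{lem:cover}, and by exploiting the chain of subcodes $C^{(0)*} \supset C^{(1)*} \supset \cdots \supset C^{(u)*}$.

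For the first bullet, I would start from Corollary~\ref{extended}: $C^{(i)*}$ is a completely regular code of length $2^m$ with covering radius $4$ and the displayed intersection array. Lemma~\ref{lem:2.5} then immediately gives that $\Gamma^{(i)*}$ is distance-regular of diameter $4$ with the same array. The vertex count $2^{m+i+1}$ is the index $[\F_2^{2^m}:C^{(i)*}]$, which follows at once from $\dim(C^{(i)*}) = \dim(C^{(i)}) = 2^m - 1 - m - i$. Antipodality of $\Gamma^{(i)*}$ was pointed out in the preamble to Lemma~\ref{lem:cover}, since all weight-$4$ cosets of $\mathcal{H}_m^*$ lie in $\mathcal{H}_m^* \supset C^{(i)*}$.

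For the embedded-subgraph and covering bullets I would use the containment $C^{(i)*} \subset C^{(j)*}$ of index $2^{i-j}$ whenever $j \leq i$. Each coset of $C^{(j)*}$ is then a disjoint union of $2^{i-j}$ cosets of $C^{(i)*}$, giving a well-defined map $\pi \colon \Gamma^{(i)*} \to \Gamma^{(j)*}$ whose fibres have size $r_{i,j}=2^{i-j}$. The map $\pi$ is a graph homomorphism because an edge of $\Gamma^{(i)*}$ arises from a pair of vectors at Hamming distance $1$ in neighbouring $C^{(i)*}$-cosets, and these same two vectors still witness adjacency of the corresponding $C^{(j)*}$-cosets. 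Since all graphs in the chain have the same valency $b_0$, the map $\pi$ is valency-preserving and hence realises $\Gamma^{(i)*}$ as an antipodal $r_{i,j}$-cover of $\Gamma^{(j)*}$ in the sense of Section~\ref{pre}. Taking $j=i-1$ and fixing a section of the index-$2$ quotient $C^{(i-1)*}/C^{(i)*}$ then realises $\Gamma^{(i)*}$ inside $\Gamma^{(i+1)*}$ as the claimed embedded subgraph.

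The final bullet is handled by Theorem~\ref{dt}, which reduces complete transitivity of $C^{(i)*}$ to that of $C^{(i)}$. The propositions of Section~3 establish complete transitivity of $C^{(i)}$ for $i\in\{0,1,u\}$, and the remark following the conjecture covers the remaining case $i=2$ when $m=6$. Applying Lemma~\ref{lem:2.5} once more promotes complete transitivity of $C^{(i)*}$ to distance-transitivity of $\Gamma^{(i)*}$. The main sticking point I anticipate is the covering bullet: one has to verify carefully not only that $\pi$ is a valency-preserving graph homomorphism, but that the local edge structure around each fibre lifts in the right way to make $\pi$ a genuine antipodal cover; this again reduces to the equality of the $b_0$-parameters across the entire chain, which is read off directly from the intersection array.
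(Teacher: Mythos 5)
Your proposal is correct and follows essentially the same route as the paper, which offers no explicit proof beyond the single sentence invoking Lemma~\ref{lem:2.5}, Theorem~\ref{dt} and Corollary~\ref{extended} (together with the antipodality remark preceding Lemma~\ref{lem:cover}); you simply make explicit the vertex count, the coset-quotient covering maps, and the $m=6$, $i=2$ case that the paper leaves implicit. The only blemish is a small index slip in the subgraph paragraph (you set $j=i-1$ but then speak of embedding $\Gamma^{(i)*}$ into $\Gamma^{(i+1)*}$), which does not affect the substance.
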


We also conjecture that the graphs $\Gamma^{(i)*}$  are
distance-transitive for $i\in \{2,\ldots,u-1\}$ and $2^i \leq
u+1$.

The first graphs $\Gamma^{(1)}$ and $\Gamma^{(1)*}$ are
well known distance-transitive graphs (see \cite{bor3,bor4} and
references there).

Graphs $\Gamma^{(u)}$ and $\Gamma^{(u)*}$ are also known. The
corresponding codes $C^{(u)}$ and $C^{(u)*}$ have been constructed by
Kasami \cite{kas} and have been presented in a very symmetric form
by Calderbank and Goethals \cite{cal}. They proved that these codes
form an association scheme \cite{del1}, which immediately implies the
existence of the corresponding distance-regular graphs $\Gamma^{(u)}$
and $\Gamma^{(u)*}$ (Ch. 11 in \cite{bro2}).

All graphs $\Gamma^{(i)}$ for $i=0,1, \ldots, u$ have been
constructed by Godsil and Hensel using the Quotient Construction \cite{god}.
But it was not mentioned in all references above that some of these graphs are
completely transitive. Besides, except for the graphs $\Gamma^{(u)}$, it was
not stated that these graphs can be constructed as coset graphs.

The graphs $\Gamma^{(i)*}$ for $i=2, \ldots, u-1$ seems to be new; we
could not find graphs with these parameters
in the above mentioned literature.

\end{document}